\documentclass[11pt]{amsart}

\usepackage[margin=1.12in]{geometry}
\usepackage{amsmath,amssymb,amsthm,mathtools,cite}
\usepackage[hidelinks]{hyperref}
\usepackage{enumitem}
\usepackage{microtype}

\numberwithin{equation}{section}

\newtheorem{theorem}{Theorem}[section]
\newtheorem{proposition}[theorem]{Proposition}
\newtheorem{lemma}[theorem]{Lemma}
\newtheorem{corollary}[theorem]{Corollary}
\theoremstyle{definition}
\newtheorem{definition}[theorem]{Definition}
\newtheorem{hypothesis}[theorem]{Hypothesis}
\theoremstyle{remark}
\newtheorem{remark}[theorem]{Remark}

\newlist{subconditions}{enumerate}{1}
\setlist[subconditions]{
	label=\textup{(\thetheorem.\arabic*)},
	ref=\thetheorem.\arabic*,
	leftmargin=4.2em,
	labelsep=0.65em,
	itemsep=0.5\baselineskip,
	topsep=0.5\baselineskip
}

\DeclareMathOperator{\Var}{Var}
\newcommand{\R}{\mathbb R}
\newcommand{\E}{\mathbb E}
\newcommand{\B}{\mathcal B}
\newcommand{\A}{\mathcal A}
\newcommand{\D}{\mathcal D}
\newcommand{\Renew}{\mathcal R}
\newcommand{\Tail}{\mathcal T}
\newcommand{\C}{\mathcal C}
\newcommand{\1}{\mathbf 1}
\newcommand{\norm}[1]{\left\lVert #1\right\rVert}
\newcommand{\abs}[1]{\left\lvert #1\right\rvert}
\newcommand{\floor}[1]{\left\lfloor #1\right\rfloor}
\newcommand{\Sh}{\mathrm H}
\newcommand{\given}{\,\vert\,}

\allowdisplaybreaks

\title[Survivor renewal for open intermittent maps]{Survivor-conditioned renewal laws and observable bounds for open intermittent maps}
\author{Jason Duvall}
\date{}
\subjclass[2020]{Primary 37E05; Secondary 37A25, 37D25, 37A35, 37A50, 60K05.}

\begin{document}

\begin{abstract}
	Recent numerical computations and stochastic modeling by Brevitt and Klages suggest that introducing a hole in a Pomeau--Manneville map can suppress survivor-conditioned Lyapunov stretching. We prove a deterministic renewal theorem which explains this phenomenon and its observable-level generalizations. For an open intermittent map induced on a base away from the neutral fixed point, we describe the asymptotic distribution of the number of completed survivor returns to the base, conditioned on survival up to time $t$. The limiting law is expressed in terms of the killed induced transfer operator; for the conditionally invariant density of the killed induced system it is geometric. We then prove two reward results for additive observables. A reward domination theorem gives bounded survivor-conditioned Birkhoff sums, while a stronger final-tail asymptotic gives convergence to a finite limit. For generalized Pomeau--Manneville maps, bounded observables satisfying $\abs{\psi(x)}\le Cx^\gamma$ near the neutral fixed point and a mild variation condition satisfy the domination hypotheses. When the neutral branch and final tails satisfy the corresponding regularity assumptions, asymptotically regular observables satisfy the convergence hypotheses. In particular, $\psi=\log\abs{f'}$ gives bounded survivor-conditioned Lyapunov stretching for the generalized class; under these additional regularity assumptions, it converges. Under an additional entropy-domination assumption, we also derive a zero entropy-rate consequence for survivor return-length names and record the complementary linear growth of stretching when the hole contains a neighborhood of the neutral fixed point.
\end{abstract}

\maketitle

\section{Introduction}

Intermittent interval maps, originating in the Pomeau--Manneville intermittency mechanism~\cite{PomeauManneville1980,GaspardWang1988}, combine expansion away from a neutral fixed point with long return intervals spent near it, producing polynomial return tails, sublinear stretching, and sometimes infinite invariant measures; see, for example,~\cite{Aaronson1997,LSV1999,Young1999,Gouezel2004}. For a closed system, one natural way to measure stretching or other accumulated quantities is through Birkhoff sums
\begin{equation*}
	S_n\psi(x)=\sum_{k=0}^{n-1}\psi(f^kx).
\end{equation*}
The Lyapunov stretching cocycle corresponds to $\psi=\log\abs{f'}$; for almost every orbit segment on which $f^n$ is differentiable, this sum is $\log\abs{(f^n)'(x)}$. Operator-renewal methods are a standard tool for studying nonuniformly expanding systems and intermittent dynamics; see, for example,~\cite{MelbourneTerhesiu2012,Sarig2002}. The renewal scheme below uses the same broad operator-renewal philosophy, but for survivor-conditioned orbit segments in an open system.

The present paper is motivated by recent work of Brevitt and Klages~\cite{BrevittKlages2025} on open Pomeau--Manneville maps. Their computations and stochastic model suggest that introducing a hole can suppress survivor-conditioned Lyapunov stretching. More precisely, they study finite-time quantities of the form
\begin{equation}\label{eq:intro-bk-profile}
	\E\left[S_n\log\abs{f'}\mid \tau_H>t\right],
	\qquad 0\le n\le t,
\end{equation}
where $\tau_H$ is the hitting time of the hole. We prove a deterministic renewal theorem for the corresponding survivor-conditioned statistics. The theorem is formulated for general observables first; Lyapunov stretching is then obtained as a particular application.

The main result is stated for an induced open system. Let $Y$ be an inducing base away from the neutral fixed point, let $R$ be the first return time to $Y$, and let $F=f^R$ be the induced map, defined almost everywhere on $Y$. The completed survivor-return sets are encoded by a killed induced transfer operator $\A$. The final unfinished excursion at the observation time is encoded separately by a scalar functional $\Tail_t$.

The tail functional satisfies
\begin{equation}\label{eq:intro-tail-functional}
	\Tail_t(v)=a_t\ell(v)+o(a_t),
	\qquad a_t\asymp (1+t)^{-\kappa},
\end{equation}
where $\ell:\B\to\R$ is a positive continuous functional on a suitable Banach space $\B$ of integrable functions on $Y$. Survival up to time $t$ decomposes into finitely many completed survivor returns followed by a final unfinished excursion at time $t$. If $v\in\B$ is a nonnegative density on $Y$ with $\ell((I-\A)^{-1}v)>0$, and if $N_t$ denotes the number of completed survivor returns to $Y$ up to time $t$, then our survivor-renewal theorem gives, for every fixed $r\ge0$,
\begin{equation}\label{eq:intro-main-law}
	\mathbb P_v(N_t=r\mid \tau_H>t)
	\longrightarrow
	\frac{\ell(\A^r v)}{\ell((I-\A)^{-1}v)}.
\end{equation}
The formula gives the limiting distribution of the number of completed survivor returns in this decomposition. If $h$ is a conditionally invariant density for the killed induced operator with eigenvalue $\lambda\in(0,1)$, this limiting law is geometric:
\begin{equation}\label{eq:intro-geometric}
	\mathbb P_h(N_t=r\mid \tau_H>t)
	\longrightarrow
	(1-\lambda)\lambda^r.
\end{equation}
This gives a deterministic counterpart to the geometric killing law appearing in the stochastic model of~\cite{BrevittKlages2025}.

The same renewal structure gives bounds and, under an additional regularity assumption, limits for survivor-conditioned Birkhoff sums. For observables whose completed-excursion rewards and final-tail rewards are dominated at the correct scale, the conditional expectations
\begin{equation}\label{eq:intro-observable-bound}
	\E_v[S_t\psi\mid\tau_H>t]
\end{equation}
remain bounded. If, in addition, the final-tail reward has an asymptotic of the form
\begin{equation}\label{eq:intro-reward-tail}
	\Tail_{\psi,t}(v)=a_t\ell_\psi(v)+o(a_t),
\end{equation}
then the conditional expectations converge to a finite limit. This separation is important: a size estimate for $\psi$ near the neutral fixed point gives boundedness, but convergence requires a limiting asymptotic profile.

For generalized Pomeau--Manneville maps with neutral exponent $\gamma>0$, the domination result applies to bounded observables satisfying
\begin{equation}\label{eq:intro-vanish}
	\abs{\psi(x)}\le Cx^\gamma
	\qquad\text{near }0,
\end{equation}
together with a mild variation condition. Convergence requires additional asymptotic/profile regularity. Suppose first that the observable is asymptotically regular in the sense that
\begin{equation}\label{eq:intro-psi-regular}
	\frac{\psi(x)}{x^\gamma}\to \mathrm{const}
	\qquad(x\to0).
\end{equation}
If, in addition, the neutral branch is asymptotically regular and the final tails satisfy the corresponding profile regularity, then the survivor-conditioned Birkhoff sums converge. The Lyapunov stretching observable satisfies the domination condition because
\begin{equation}\label{eq:intro-log-growth}
	\log\abs{f'(x)}\asymp x^\gamma
	\qquad(x\to0).
\end{equation}
If the neutral branch is asymptotically regular, for instance if $f''(x)/x^{\gamma-1}\to c_f>0$, and the final tails satisfy the corresponding profile regularity, then
\begin{equation}\label{eq:intro-log-asymp}
	\log\abs{f'(x)}\sim \frac{c_f}{\gamma}x^\gamma,
\end{equation}
and the survivor-conditioned Lyapunov stretching converges. Since $\log\abs{f'}\ge0$ for the models considered here, boundedness at the diagonal time also implies the two-parameter boundedness in \eqref{eq:intro-bk-profile}.

We also record one consequence and one complementary case. Under an additional entropy-domination hypothesis, the renewal framework gives sublinear symbolic complexity for survivor return-length names. In the complementary case where the hole contains a neighborhood of the neutral fixed point, survivor orbit segments avoid that neighborhood and therefore remain in a uniformly expanding region; in this case Lyapunov stretching grows at least linearly.

The relation with existing work on open intermittent systems is complementary. Demers and Fernandez proved polynomial escape and singular limiting distributions for intermittent maps with holes away from the neutral fixed point~\cite{DemersFernandez2016}. Demers and Todd developed a thermodynamic description of slow and fast escape regimes for open intermittent maps~\cite{DemersTodd2017}. These works describe escape, limiting survivor distributions, conditionally invariant structures, and pressure. The present result refines that picture at the level of survivor itineraries and accumulated rewards. In particular, convergence of pushed-forward survivor measures to the neutral fixed point does not determine the number of completed returns up to time $t$, nor the Birkhoff sum accumulated along the surviving path.

The paper is organized as follows. Section~\ref{sec:abstract} introduces the abstract induced survivor-renewal scheme and states the standing hypotheses on the killed return operators and tail functionals. Section~\ref{sec:renewal} proves the fixed-$r$ asymptotic and the limiting law for $N_t$ under these hypotheses. Section~\ref{sec:observables} proves reward bounds and convergence for additive observables under the corresponding reward hypotheses. Section~\ref{sec:entropy} derives the zero entropy-rate consequence for return-length names under an entropy-domination hypothesis. Section~\ref{sec:mp} verifies the standing hypotheses and reward bounds for generalized Pomeau--Manneville maps for holes bounded away from the neutral fixed point and satisfying the induced-hole assumptions stated there. It also gives convergence for asymptotically regular behavior near the neutral fixed point, asymptotically regular observables, and final tails with the corresponding profile regularity. Section~\ref{sec:neutral-hole} treats holes containing a neighborhood of the neutral fixed point.

\section{The induced survivor-renewal scheme}\label{sec:abstract}

\subsection{Notation}

We write $b_t\sim c_t$ to mean $b_t/c_t\to1$ as $t\to\infty$, and $b_t\asymp c_t$ to mean that $b_t/c_t$ is bounded above and below by positive constants. Throughout this section, hitting and return times are allowed to take the value $+\infty$, with the convention $\inf\emptyset=\infty$.

\subsubsection{Base and survivor excursions}

We first describe the survivor excursions in the original system. A completed survivor return is an excursion that returns to the base before entering the hole; an unfinished survivor tail is an excursion that has not yet returned by the observation time.

Let $(X,m)$ be a measure space, and let $f:X\to X$ be nonsingular, so $f$ preserves $m$-null sets. Let $Y\subset X$ be an inducing base with $0<m(Y)<\infty$. We write
\begin{equation}\label{eq:normalized-base-measure}
	m_Y\coloneqq\frac{m|_Y}{m(Y)}
\end{equation}
for normalized reference measure on $Y$. Let
\begin{equation}\label{eq:return-time}
	R(y)\coloneqq\inf\{n\ge1:f^n(y)\in Y\}
\end{equation}
be the return time to $Y$. We assume $R<\infty$ for $m_Y$-almost every $y\in Y$, and define, for $m_Y$-almost every $y\in Y$,
\begin{equation}\label{eq:induced-map}
	F(y)\coloneqq f^{R(y)}(y).
\end{equation}

The induced transfer operator $\mathcal L_F$ is defined by
\begin{equation}\label{eq:transfer-duality}
	\int_Y (\mathcal L_F v)w\,dm_Y
	=
	\int_Y v(w\circ F)\,dm_Y
\end{equation}
for suitable observables $w$. In the interval-map case this is equivalently given, on monotone branches, by
\begin{equation}\label{eq:transfer-branch}
	(\mathcal L_F v)(x)
	=
	\sum_{y\in F^{-1}(x)}\frac{v(y)}{\abs{F'(y)}}.
\end{equation}

Let $H\subset X$ be a measurable set, which we call the hole, and define the hitting time
\begin{equation}\label{eq:hitting-time}
	\tau_H(x)\coloneqq\inf\{n\ge0:f^n(x)\in H\}.
\end{equation}
For $n\ge1$, define the return-level set $Y_n$ by
\begin{equation}\label{eq:return-partition}
	Y_n\coloneqq\{y\in Y:R(y)=n\}.
\end{equation}
For the fixed hole $H$, define the survivor-return set
\begin{equation}\label{eq:survivor-return-branch}
	Y_n^{\mathrm{surv}}
	\coloneqq
	Y_n\cap\{\tau_H>n\}.
\end{equation}
Thus $Y_n^{\mathrm{surv}}$ consists of points whose excursion returns to $Y$ at time $n$, while avoiding $H$ up to and including that return time. For $n\ge0$, define the survivor tail set
\begin{equation}\label{eq:unfinished-tail-set}
	Y_{>n}^{\mathrm{surv}}
	\coloneqq
	\{y\in Y:R(y)>n,\ \tau_H(y)>n\}.
\end{equation}
This is the set of points whose $f$-orbit has not returned to $Y$ by time $n$ and has avoided $H$ through time $n$.

\subsubsection{Completed-return operators}

The completed survivor returns are encoded by transfer operators on the base. The index $n$ records the length of one completed return, while the pair $(r,s)$ records $r$ completed returns with total elapsed time $s$.

The abstract hypotheses below will be stated on a Banach lattice $\B\subset L^1(m_Y)$ of real-valued integrable functions on $Y$, such as $BV(Y)$ in the interval applications in Section~\ref{sec:mp}. A continuous linear functional $\ell:\B\to\R$ is called positive if $v\ge0$ implies $\ell(v)\ge0$. All initial measures considered below are absolutely continuous with respect to $m_Y$, with densities in $\B$. Until Hypothesis~\ref{hyp:renewal} is imposed, the following operators and functionals should be read as formal objects. For generalized Pomeau--Manneville maps, the corresponding requirements are isolated as regular induced-hole conditions in Section~\ref{sec:mp} and then used to verify the abstract hypotheses.

For $n\ge1$, define the return operator $\A_n$ by
\begin{equation}\label{eq:A-n}
	\A_n v\coloneqq\mathcal L_F(\1_{Y_n^{\mathrm{surv}}}v).
\end{equation}
Thus $\A_n v$ is the density on $Y$ obtained by pushing forward $\1_{Y_n^{\mathrm{surv}}}v\,dm_Y$ under $F$. The killed induced transfer operator is
\begin{equation}\label{eq:A}
	\A\coloneqq\sum_{n\ge1}\A_n,
\end{equation}
with convergence on $\B$ imposed in Hypothesis~\ref{hyp:renewal}. Under that hypothesis, $\A$ is a bounded operator on $\B$, so its powers are defined on $\B$. Thus, for an initial nonnegative density $v$ on $Y$, $\A v$ is the density of the pushforward survivor measure after one completed survivor return to $Y$, and $\A^r v$ is the corresponding density after $r$ completed survivor returns.

For $r,s\ge0$, define $\A_{r,s}$ by
\begin{equation}\label{eq:A-rs}
	\A_{r,s}
	\coloneqq
	\sum_{\substack{n_1+\cdots+n_r=s\\ n_i\ge1}}
	\A_{n_r}\cdots\A_{n_1},
\end{equation}
with the convention $\A_{0,0}=I$ and $\A_{0,s}=0$ for $s\ne0$. The operator $\A_{r,s}$ corresponds to $r$ completed survivor returns whose return times sum to $s$.

Summing over all possible numbers of completed survivor returns, define $\Renew_s$ by
\begin{equation}\label{eq:Renew-s}
	\Renew_s\coloneqq\sum_{r\ge0}\A_{r,s}.
\end{equation}

\subsubsection{Tail functionals and survivor masses}

It remains to attach the final unfinished excursion. This is done by a scalar tail functional, which measures the mass of points that survive without returning to the base for the remaining time.

For $y$ with $\tau_H(y)>t$, let $N_t(y)$ denote the number of completed survivor returns to $Y$ made by the orbit of $y$ up to time $t$.

For an initial density $v\in\B$, let $\mu_v$ denote the finite signed measure on $Y$ defined by
\begin{equation}\label{eq:mu-v}
	\mu_v(A)\coloneqq\int_A v\,dm_Y.
\end{equation}
For $v\ge0$, conditional probabilities and expectations are computed using $\mu_v$; normalizing $v$ is unnecessary because only ratios of survivor masses are used.

Define the tail functional $\Tail_n$ by
\begin{equation}\label{eq:Tail}
	\Tail_n(v)\coloneqq\int_{Y_{>n}^{\mathrm{surv}}}v\,dm_Y,
	\qquad n\ge0,\quad v\in\B.
\end{equation}

Let
\begin{equation}\label{eq:M-r-def}
	M_r(t;v)
	\coloneqq
	\mu_v(\{\tau_H>t,\ N_t=r\})
\end{equation}
be the $\mu_v$-mass of points that survive up to time $t$ and make exactly $r$ completed survivor returns. If the $r$ completed survivor returns have lengths $n_1,\dots,n_r$ and total length $s=n_1+\cdots+n_r$, then $\A_{n_r}\cdots\A_{n_1}v$ is the density on $Y$ obtained by restricting to the corresponding survivor-return itinerary and pushing forward by $F^r$. To have exactly $r$ completed survivor returns by time $t$, the remaining segment of length $t-s$ must lie in $Y_{>t-s}^{\mathrm{surv}}$. Summing first over all such $r$-tuples with total length $s$, and then over $0\le s\le t$, gives
\begin{equation}\label{eq:M-r}
	M_r(t;v)
	=
	\sum_{s=0}^{t}\Tail_{t-s}(\A_{r,s}v).
\end{equation}
Let
\begin{equation}\label{eq:M-total-def}
	M(t;v)
	\coloneqq
	\mu_v(\{\tau_H>t\})
\end{equation}
be the total survival mass. Then
\begin{equation}\label{eq:M-total}
	M(t;v)=\sum_{r\ge0}M_r(t;v)
	=\sum_{s=0}^{t}\Tail_{t-s}(\Renew_s v).
\end{equation}
The second equality follows by summing over the possible numbers of completed survivor returns before the final unfinished excursion.

Thus $\A_{r,s}$ fixes both the number $r$ of completed survivor returns and their total elapsed time $s$, while $\Renew_s$ sums over all possible completed-return counts with total elapsed time $s$. The tail functional $\Tail_{t-s}$ then accounts for the final unfinished excursion. This bookkeeping is the only renewal structure used in the sequel.

\subsection{Renewal hypotheses}

We now isolate the assumptions that drive the abstract renewal theorem. They separate the renewal argument itself from the model-specific estimates verified later for Pomeau--Manneville maps.

The following hypotheses are stated at the induced level.

\begin{hypothesis}[Survivor-renewal hypotheses]\label{hyp:renewal}
	There are a Banach lattice $\B\subset L^1(m_Y)$, a number $\kappa>0$, a positive sequence $(a_t)_{t\ge0}$ with $a_0=1$, $a_t\asymp (1+t)^{-\kappa}$, and $a_{t-s}/a_t\to1$ for each fixed $s\ge0$, and a nonzero positive continuous functional $\ell:\B\to\R$ such that:
	\begin{subconditions}
		\item\label{R-tail} \textbf{Final-tail asymptotic.}
		For every $v\in\B$,
		\begin{equation*}
			\Tail_t(v)=a_t\ell(v)+o(a_t).
		\end{equation*}
		Moreover, for some $C_T<\infty$,
		\begin{equation*}
			\abs{\Tail_t(v)}\le C_T a_t\norm{v}_{\B}
		\end{equation*}
		for all $t\ge0$ and all $v\in\B$.

		\item\label{R-operators} \textbf{Killed induced operator.}
		For each $n\ge1$, $\A_n$ defines a bounded positive operator $\B\to\B$, the series $\A=\sum_{n\ge1}\A_n$ converges in operator norm, and hence $\A:\B\to\B$ is bounded. Moreover,
		\begin{equation*}
			r(\A)<1.
		\end{equation*}
		Consequently
		\begin{equation*}
			\Renew\coloneqq\sum_{r\ge0}\A^r=(I-\A)^{-1}
		\end{equation*}
		exists on $\B$.

		\item\label{R-fixed-local} \textbf{Fixed-return local bound.}
		For each fixed $r\ge1$ there is $C_r<\infty$ such that
		\begin{equation*}
			\norm{\A_{r,s}}_{\B\to\B}
			\le C_r(1+s)^{-1-\kappa}
			\qquad(s\ge0).
		\end{equation*}

		\item\label{R-renewal-local} \textbf{Renewal local bound.}
		The operators $\Renew_s$ are bounded on $\B$, the series $\sum_{s\ge0}\Renew_s$ converges in operator norm and equals $\Renew$. Moreover, there is a constant $C_R<\infty$ such that
		\begin{equation*}
			\norm{\Renew_s}_{\B\to\B}
			\le C_R(1+s)^{-1-\kappa}
			\qquad(s\ge0).
		\end{equation*}
	\end{subconditions}
\end{hypothesis}

\begin{remark}
	The local bounds reflect the distinction between a local return-time estimate and a tail estimate. A completed return of length $s$ has local order $s^{-1-\kappa}$, whereas the event $\{R>t\}$ has tail order $a_t\asymp t^{-\kappa}$. This distinction is what makes the fixed-$r$ asymptotic stable under summation over completed return times.
\end{remark}

The hypotheses above are stated directly in operator form. In applications it is often easier to dominate the return operators by a scalar defective renewal sequence; the next lemma records that reduction.

\begin{lemma}[Scalar sufficient condition for operator renewal domination]\label{lem:scalar-domination}
	Suppose each $\A_n$ is positive on $\B$, and suppose there is a nonnegative sequence $(b_n)_{n\ge1}$ and $C>0$ such that
	\begin{equation}\label{eq:scalar-domination-hyp}
		\norm{\A_n}_{\B\to\B}\le b_n,
		\qquad
		\sum_{k\ge1}b_k<1,
		\qquad
		b_n\le C n^{-1-\kappa}
	\end{equation}
	for all $n\ge1$. Then hypotheses \textup{(\ref{R-operators})}, \textup{(\ref{R-fixed-local})}, and \textup{(\ref{R-renewal-local})} hold.
\end{lemma}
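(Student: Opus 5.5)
The plan is to reduce everything to scalar convolution estimates: submultiplicativity of the operator norm will dominate each operator-valued quantity by the corresponding scalar quantity built from $(b_n)$. Write $\beta\coloneqq\sum_{k\ge1}b_k<1$.

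\textbf{Step 1: hypothesis (\ref{R-operators}).} Since $\sum_n\norm{\A_n}\le\sum_n b_n=\beta<\infty$, the series $\A=\sum_n\A_n$ converges absolutely in operator norm. Hence $\A$ is bounded with $\norm{\A}\le\beta$. The spectral radius then satisfies $r(\A)\le\norm{\A}\le\beta<1$. Positivity of $\A$ is inherited from the $\A_n$, because the positive cone of the Banach lattice is closed.

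\textbf{Step 2: the fixed-$r$ and renewal sequences.} Define the scalar convolution powers
\begin{equation*}
	b^{*r}_s=\sum_{n_1+\cdots+n_r=s,\ n_i\ge1}b_{n_1}\cdots b_{n_r}.
\end{equation*}
By the triangle inequality and submultiplicativity, $\norm{\A_{r,s}}\le b^{*r}_s$, and the defining series of $\A_{r,s}$ is a finite sum. The fixed-return local bound then reduces to the scalar estimate $b^{*r}_s\le C_r(1+s)^{-1-\kappa}$.

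For $\Renew_s$, note that $\A_{r,s}=0$ when $r>s$, so $\Renew_s$ is a finite sum and is bounded, with $\norm{\Renew_s}\le u_s\coloneqq\sum_{r\ge0}b^{*r}_s$. Moreover
\begin{equation*}
	\sum_s\norm{\Renew_s}\le\sum_s u_s=\sum_r\beta^r=(1-\beta)^{-1}<\infty.
\end{equation*}
Absolute convergence justifies rearranging the double sum $\sum_{s}\sum_r\A_{r,s}=\sum_r\sum_s\A_{r,s}=\sum_r\A^r=\Renew$. Here $\sum_s\A_{r,s}=\A^r$ follows by expanding the product of absolutely convergent series. So it remains to prove the scalar bound $u_s\le C_R(1+s)^{-1-\kappa}$.

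\textbf{Step 3: the scalar estimate, which is the main obstacle.} This is the classical fact that a defective renewal sequence with polynomially decaying increments inherits the same decay. I would prove the uniform-in-$r$ bound
\begin{equation*}
	b^{*r}_s\le K r^{2+\kappa}\beta^{r-1}(1+s)^{-1-\kappa}.
\end{equation*}
This immediately gives the $C_r$ bound for fixed $r$ and, after summing the series in $r$ (which converges since $\beta<1$), the bound on $u_s$.

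The argument is a "one big jump" estimate. If $n_1+\cdots+n_r=s$, then some $n_j\ge s/r$. Fixing which index $j$ is largest-ish gives
\begin{equation*}
	b^{*r}_s\le\sum_{j=1}^r\sum_{n\ge s/r}b_n\,b^{*(r-1)}_{s-n}\le r\,\sup_{n\ge s/r}b_n\cdot\beta^{r-1}.
\end{equation*}
Using $b_n\le Cn^{-1-\kappa}$ yields $b^{*r}_s\le C r(s/r)^{-1-\kappa}\beta^{r-1}=Cr^{2+\kappa}\beta^{r-1}s^{-1-\kappa}$ for $s\ge1$. The case $s=0$ is trivial, since $b^{*r}_0=0$ for $r\ge1$ and $u_0=1$.

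This crude union bound avoids any delicate induction, and it already gives the desired constants: $C_r=2^{1+\kappa}Cr^{2+\kappa}\beta^{r-1}$ and $C_R=1+2^{1+\kappa}C\sum_r r^{2+\kappa}\beta^{r-1}<\infty$. The only care needed is to verify that the $r$-dependence is polynomial, so that summing against the geometric factor $\beta^{r-1}$ converges.
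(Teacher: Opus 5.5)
Your proof is correct. Steps 1 and 2 agree with the paper: norm convergence of $\sum_n\A_n$ with $r(\A)\le\norm{\A}<1$, the dominations $\norm{\A_{r,s}}\le b^{*r}_s$ and $\norm{\Renew_s}\le u_s$, and absolute convergence to identify $\sum_s\Renew_s=\Renew$. The genuine difference is the scalar estimate.

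The paper proves the fixed-$r$ bound by splitting at $n_1\le s/2$ for $r=2$ and inducting on $r$. It then treats $u_s$ separately by citing the defective renewal theorem for locally subexponential sequences (Foss--Korshunov--Zachary). A separate argument is needed there because the split-and-induct recursion gives only $C_r\lesssim 2^{1+\kappa}(\beta C_{r-1}+C\beta^{r-1})$. Those constants can grow like $(2^{1+\kappa}\beta)^r$ and need not be summable in $r$.

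Your one-big-jump bound $b^{*r}_s\le Cr^{2+\kappa}\beta^{r-1}s^{-1-\kappa}$ settles \textup{(\ref{R-fixed-local})} and \textup{(\ref{R-renewal-local})} at once. Some $n_j\ge s/r$, the remaining $r-1$ factors sum to at most $\beta^{r-1}$, and the polynomial loss $r^{2+\kappa}$ is absorbed by the geometric factor because the sequence is defective ($\beta<1$).

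What your route buys is a fully self-contained argument with explicit constants $C_r$ and $C_R$. What the cited route buys is generality and sharpness. It applies to arbitrary locally subexponential $(b_n)$, whereas you use the pure power-law scaling $\sup_{n\ge s/r}b_n\le Cr^{1+\kappa}s^{-1-\kappa}$. For regularly varying $b_n$ it also yields the asymptotic $u_s\sim b_s/(1-\beta)^2$ rather than only an upper bound. That is more than the lemma requires.
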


\begin{proof}
	The operator series defining $\A$ converges in norm and
	\begin{equation*}
		\norm{\A}\le \sum_{n\ge1}b_n<1.
	\end{equation*}
	Hence $r(\A)<1$, and the Neumann series
	\begin{equation*}
		\Renew=\sum_{r\ge0}\A^r=(I-\A)^{-1}
	\end{equation*}
	converges in operator norm. This proves the killed induced operator condition \textup{(\ref{R-operators})}.

	For fixed $r\ge1$, the definition of $\A_{r,s}$ gives
	\begin{equation*}
		\norm{\A_{r,s}}_{\B\to\B}
		\le
		\sum_{\substack{n_1+\cdots+n_r=s\\ n_i\ge1}}
		b_{n_1}\cdots b_{n_r}.
	\end{equation*}
	Since $b_n=O(n^{-1-\kappa})$ and $\sum_n b_n<\infty$, the standard fixed-convolution estimate gives
	\begin{equation*}
		\sum_{\substack{n_1+\cdots+n_r=s\\ n_i\ge1}}
		b_{n_1}\cdots b_{n_r}
		\le C_r(1+s)^{-1-\kappa}.
	\end{equation*}
	For $r=2$ this follows by splitting the sum into $n_1\le s/2$ and $n_1>s/2$; the general fixed-$r$ case follows by induction. Thus the fixed-return local bound \textup{(\ref{R-fixed-local})} holds.

	Let $u_0=1$ and
	\begin{equation*}
		u_s=
		\sum_{r\ge1}
		\sum_{\substack{n_1+\cdots+n_r=s\\ n_i\ge1}}
		b_{n_1}\cdots b_{n_r},
		\qquad s\ge1.
	\end{equation*}
	Then $\norm{\Renew_s}\le u_s$. Since $\sum_{n\ge1}b_n<1$ and $b_n=O(n^{-1-\kappa})$, the standard defective renewal estimate for locally subexponential tails gives
	\begin{equation*}
		u_s=O(s^{-1-\kappa});
	\end{equation*}
	see, for example,~\cite{FossKorshunovZachary2013}. Moreover, $\sum_s u_s<\infty$, so the series $\sum_{s\ge0}\Renew_s$ converges in operator norm. Expanding by completed-return number gives
	\begin{equation*}
		\sum_{s\ge0}\Renew_s
		=\sum_{r\ge0}\sum_{s\ge0}\A_{r,s}
		=\sum_{r\ge0}\A^r
		=\Renew.
	\end{equation*}
	Therefore the renewal local bound \textup{(\ref{R-renewal-local})} holds.
\end{proof}

\begin{remark}
	Lemma~\ref{lem:scalar-domination} is only an operator-side criterion. It does not verify the final-tail asymptotic or final-tail bound in \textup{(\ref{R-tail})}; those estimates depend on the geometry of the final survivor tail set and must be checked separately.
\end{remark}

\section{The survivor-conditioned renewal theorem}\label{sec:renewal}

We first prove the fixed-return-count asymptotic. The point is that, at leading order, the long observation time is carried by the final unfinished excursion; any fixed number of completed survivor returns contributes only through the killed induced operator $\A$.

\begin{theorem}[Fixed-$r$ survivor asymptotic]\label{thm:fixed-r}
	Assume Hypothesis~\ref{hyp:renewal}. Let $v\in\B$. For every fixed $r\ge0$,
	\begin{equation}\label{eq:fixed-r-asymp}
		M_r(t;v)
		=
		\mu_v(\{\tau_H>t,\ N_t=r\})
		=
		a_t\ell(\A^r v)+o(a_t)
		\qquad(t\to\infty).
	\end{equation}
\end{theorem}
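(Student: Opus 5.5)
Starting from the renewal decomposition \eqref{eq:M-r}, $M_r(t;v)=\sum_{s=0}^{t}\Tail_{t-s}(\A_{r,s}v)$, I will show that the sum is dominated by small $s$, where $\Tail_{t-s}\approx a_t\ell$. For $r=0$ the statement is just (\ref{R-tail}), because $\A_{0,s}=0$ for $s\ne0$ and $\A_{0,0}=I$. So fix $r\ge1$ and a cutoff $K$, and split the sum into $0\le s\le K$, $K<s\le t/2$, and $t/2<s\le t$.

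\emph{Head, $s\le K$.} For each fixed $s$, the final-tail asymptotic (\ref{R-tail}) applied to the fixed vector $\A_{r,s}v\in\B$ gives $\Tail_{t-s}(\A_{r,s}v)=a_{t-s}\ell(\A_{r,s}v)+o(a_{t-s})$. Since $a_{t-s}/a_t\to1$, this equals $a_t\ell(\A_{r,s}v)+o(a_t)$. Summing over the finitely many $s\le K$ gives $a_t\,\ell\bigl(\sum_{s\le K}\A_{r,s}v\bigr)+o(a_t)$. Next, $\sum_{s\ge0}\A_{r,s}=\A^r$, obtained by expanding $(\sum_n\A_n)^r$ with norm-convergent series. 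Combined with continuity of $\ell$ and the bound $\norm{\A_{r,s}}\le C_r(1+s)^{-1-\kappa}$ from (\ref{R-fixed-local}), this yields
\begin{equation*}
	\abs{\ell(\A^rv)-\ell\Bigl(\sum_{s\le K}\A_{r,s}v\Bigr)}\le \norm{\ell}\,\norm{v}\,C_r\sum_{s>K}(1+s)^{-1-\kappa},
\end{equation*}
which tends to $0$ as $K\to\infty$.

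\emph{Middle, $K<s\le t/2$.} The uniform tail bound from (\ref{R-tail}) gives $\abs{\Tail_{t-s}(w)}\le C_Ta_{t-s}\norm{w}$. Because $a_t\asymp(1+t)^{-\kappa}$, we have $a_{t-s}\le C a_t$ uniformly for $s\le t/2$. The middle range is therefore at most $C\,C_T C_r\norm{v}\,a_t\sum_{s>K}(1+s)^{-1-\kappa}$, which is $\varepsilon_K a_t$ with $\varepsilon_K\to0$.

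\emph{Far range, $t/2<s\le t$.} This is the step where both the local bound (rather than a tail bound) and the $C_T$ bound are needed. Using $a_{t-s}\le C$ and $\norm{\A_{r,s}}\le C_r(1+s)^{-1-\kappa}\le C_r' t^{-1-\kappa}$, the range is bounded by
\begin{equation*}
	C\norm{v}\sum_{t/2<s\le t}a_{t-s}\,t^{-1-\kappa}\le C'\norm{v}\,t^{-1-\kappa}\sum_{u=0}^{t/2}(1+u)^{-\kappa}.
\end{equation*}
If $\kappa<1$ the sum over $u$ is $O(t^{1-\kappa})$, giving $O(t^{-2\kappa})=o(t^{-\kappa})$. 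If $\kappa=1$ it is $O(\log t)$, giving $O(t^{-2}\log t)$. If $\kappa>1$ it is $O(1)$, giving $O(t^{-1-\kappa})$. In every case the far range is $o(a_t)$.

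\emph{Conclusion.} Combining the three ranges,
\begin{equation*}
	\limsup_{t\to\infty}\frac{\abs{M_r(t;v)-a_t\ell(\A^rv)}}{a_t}\le C''\varepsilon_K,
\end{equation*}
and letting $K\to\infty$ gives \eqref{eq:fixed-r-asymp}. The main point requiring care is the far range, where the comparison between local order $s^{-1-\kappa}$ and tail order $t^{-\kappa}$ must be made uniformly in $\kappa>0$. The remaining steps are routine dominated-convergence bookkeeping, plus the identity $\sum_s\A_{r,s}=\A^r$, which follows from norm convergence in (\ref{R-operators}).
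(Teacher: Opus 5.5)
Your proof is correct and follows the paper's argument essentially step for step. Both arguments use the same split into a fixed cutoff, the middle range up to $t/2$ (using $a_{t-s}\le C a_t$ and the uniform tail bound), and the far range (substituting $u=t-s$ and applying the fixed-return local bound), and both identify the limit through $\sum_s\A_{r,s}=\A^r$ and continuity of $\ell$. Your case split on $\kappa<1$, $\kappa=1$, $\kappa>1$ only fills in the paper's one-line claim that $t^{-1-\kappa}\sum_{u\le t/2}a_u=o(t^{-\kappa})$.
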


\begin{proof}
	For $r=0$, \eqref{eq:M-r} gives $M_0(t;v)=\Tail_t(v)$, so the claim is exactly the final-tail asymptotic in \textup{(\ref{R-tail})}. Assume $r\ge1$.

	Fix $h\ge0$. For $0\le s\le h$, the fixed-shift regularity of $a_t$ gives $a_{t-s}/a_t\to1$, while the final-tail asymptotic in \textup{(\ref{R-tail})} gives
	\begin{equation*}
		\frac{\Tail_{t-s}(\A_{r,s}v)}{a_t}
		\longrightarrow
		\ell(\A_{r,s}v).
	\end{equation*}
	Since the sum over $s\le h$ is finite,
	\begin{equation}\label{eq:finite-part}
		\lim_{t\to\infty}
		\frac1{a_t}\sum_{s=0}^{h}\Tail_{t-s}(\A_{r,s}v)
		=
		\sum_{s=0}^{h}\ell(\A_{r,s}v).
	\end{equation}
	It remains to show that the contribution from $s>h$ is negligible after $h\to\infty$.

	Split
	\begin{equation*}
		\sum_{h<s\le t}=
		\sum_{h<s\le t/2}+
		\sum_{t/2<s\le t}.
	\end{equation*}
	For $h<s\le t/2$, polynomial comparability gives a constant $C_a$ such that $a_{t-s}\le C_a a_t$. Hence the final-tail bound in \textup{(\ref{R-tail})} gives, uniformly for $0\le s\le t/2$,
	\begin{equation*}
		\abs{\Tail_{t-s}(\A_{r,s}v)}
		\le C_T C_a a_t\norm{\A_{r,s}v}_{\B}.
	\end{equation*}
	Therefore
	\begin{equation*}
		\frac1{a_t}
		\sum_{h<s\le t/2}\abs{\Tail_{t-s}(\A_{r,s}v)}
		\le
		C\norm{v}_{\B}\sum_{s>h}(1+s)^{-1-\kappa},
	\end{equation*}
	which tends to zero as $h\to\infty$.

	For $t/2<s\le t$, set $u=t-s$. By the final-tail bound in \textup{(\ref{R-tail})} and the fixed-return local bound in \textup{(\ref{R-fixed-local})},
	\begin{equation*}
		\begin{aligned}
			\sum_{t/2<s\le t}\abs{\Tail_{t-s}(\A_{r,s}v)}
			 & \le C_T C_r\norm{v}_{\B}
			\sum_{u=0}^{t/2} a_u(1+t-u)^{-1-\kappa} \\
			 & \le C\norm{v}_{\B} t^{-1-\kappa}
			\sum_{u=0}^{t/2} a_u.
		\end{aligned}
	\end{equation*}
	Since $a_u\asymp (1+u)^{-\kappa}$ for $u\ge1$ and $a_0=1$, this last expression is $o(t^{-\kappa})=o(a_t)$ for every $\kappa>0$. Thus
	\begin{equation}\label{eq:tail-negligible}
		\lim_{h\to\infty}\limsup_{t\to\infty}
		\frac1{a_t}
		\sum_{s>h}\abs{\Tail_{t-s}(\A_{r,s}v)}=0.
	\end{equation}
	Combining \eqref{eq:finite-part} and \eqref{eq:tail-negligible},
	\begin{equation*}
		\lim_{t\to\infty}\frac{M_r(t;v)}{a_t}
		=
		\sum_{s\ge0}\ell(\A_{r,s}v).
	\end{equation*}
	The series $\sum_s\A_{r,s}$ converges in operator norm to $\A^r$, and $\ell$ is continuous. Hence the last sum is $\ell(\A^r v)$.
\end{proof}

We now pass from fixed return count to the full survivor-conditioned law by summing over all possible numbers of completed survivor returns. The renewal local bound \textup{(\ref{R-renewal-local})} is what keeps this summation on the same tail scale $a_t$.

\begin{theorem}[Survivor-conditioned renewal law]\label{thm:renewal-law}
	Assume Hypothesis~\ref{hyp:renewal}. Let $v\in\B$ be nonnegative and suppose
	\begin{equation}\label{eq:positive-denominator}
		\ell(\Renew v)>0.
	\end{equation}
	Then the total survival mass satisfies
	\begin{equation}\label{eq:survival-asymp-expanded}
		M(t;v)=\mu_v(\{\tau_H>t\})=a_t\ell(\Renew v)+o(a_t),
	\end{equation}
	and consequently
	\begin{equation}\label{eq:survival-asymp}
		M(t;v)\sim a_t\ell(\Renew v).
	\end{equation}
	Moreover, for every fixed $r\ge0$,
	\begin{equation}\label{eq:Nt-limit}
		\mathbb P_v(N_t=r\mid\tau_H>t)
		\longrightarrow
		\pi_r
		\coloneqq
		\frac{\ell(\A^r v)}{\ell(\Renew v)}.
	\end{equation}
	The sequence $(\pi_r)_{r\ge0}$ defines a probability distribution on $\{0,1,2,\dots\}$.
\end{theorem}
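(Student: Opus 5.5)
The plan is to repeat the argument of Theorem~\ref{thm:fixed-r}, applied to $M(t;v)=\sum_{s=0}^t\Tail_{t-s}(\Renew_s v)$ from \eqref{eq:M-total}. The role of the fixed-return local bound (\ref{R-fixed-local}) is now played by the renewal local bound (\ref{R-renewal-local}).

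\textbf{Step 1: the survival asymptotic.} Fix $h\ge0$. For each $0\le s\le h$, the final-tail asymptotic (\ref{R-tail}) together with $a_{t-s}/a_t\to1$ gives $\Tail_{t-s}(\Renew_s v)/a_t\to\ell(\Renew_s v)$. Summing these finitely many terms gives the limit $\sum_{s\le h}\ell(\Renew_s v)$.

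For the remaining terms, split $s>h$ into two ranges.
\begin{itemize}
\item For $h<s\le t/2$, use $a_{t-s}\le C_a a_t$ and the uniform tail bound to get a contribution of at most $C\norm{v}_{\B}\sum_{s>h}(1+s)^{-1-\kappa}$. This vanishes as $h\to\infty$.
\item For $t/2<s\le t$, set $u=t-s$. The bound $\norm{\Renew_s}\le C_R(1+s)^{-1-\kappa}$ gives a contribution of at most $C\norm{v}_{\B}\,t^{-1-\kappa}\sum_{u\le t/2}a_u$. Since $\sum_{u\le t/2}a_u$ is $O(1)$, $O(\log t)$ or $O(t^{1-\kappa})$ according as $\kappa>1$, $\kappa=1$ or $\kappa<1$, this is $o(t^{-\kappa})=o(a_t)$ in every case.
\end{itemize}
Let $h\to\infty$ and use that $\sum_s\Renew_s$ converges in operator norm to $\Renew$, together with continuity of $\ell$. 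This gives $M(t;v)/a_t\to\sum_s\ell(\Renew_s v)=\ell(\Renew v)$, which is \eqref{eq:survival-asymp-expanded}. Since $\ell(\Renew v)>0$ by \eqref{eq:positive-denominator}, this also gives \eqref{eq:survival-asymp}.

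\textbf{Step 2: the limit law.} For fixed $r$, divide Theorem~\ref{thm:fixed-r} by Step 1:
\begin{equation*}
	\mathbb P_v(N_t=r\mid\tau_H>t)=\frac{M_r(t;v)}{M(t;v)}=\frac{a_t\ell(\A^r v)+o(a_t)}{a_t\ell(\Renew v)+o(a_t)}\longrightarrow\pi_r .
\end{equation*}

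\textbf{Step 3: $(\pi_r)$ is a probability distribution.} Each $\A^r$ is positive and $v\ge0$, so $\A^r v\ge0$. Positivity of $\ell$ then gives $\pi_r\ge0$. The Neumann series $\sum_r\A^r v$ converges in $\B$ to $\Renew v$ by (\ref{R-operators}), so continuity of $\ell$ gives $\sum_r\ell(\A^r v)=\ell(\Renew v)$, that is, $\sum_r\pi_r=1$. This step uses only the Neumann series and needs no uniformity in $t$; it is the reason tightness of $N_t$ comes for free.

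\textbf{Main obstacle.} The only delicate point is the range $t/2<s\le t$ in Step 1. There the tail scale $a_u$ is not small, so the argument depends on the local decay $(1+s)^{-1-\kappa}$ of $\Renew_s$ from (\ref{R-renewal-local}), uniform over all return counts. A mere tail bound of order $s^{-\kappa}$ would give a contribution of order $t^{-\kappa}\sum_{u\le t/2}a_u$. That is not $o(a_t)$, and indeed exceeds $a_t$ by a factor of order $\log t$ or $t^{1-\kappa}$ when $\kappa\le1$.
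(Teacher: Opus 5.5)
Your proposal is correct and follows the paper's proof. You rerun the fixed-$r$ argument on $M(t;v)=\sum_{s}\Tail_{t-s}(\Renew_s v)$, with \textup{(\ref{R-renewal-local})} replacing \textup{(\ref{R-fixed-local})}, then divide Theorem~\ref{thm:fixed-r} by the survival asymptotic, and finally sum the $\pi_r$ using the Neumann series and continuity of $\ell$ (your explicit check that $\pi_r\ge0$ is a small point the paper leaves implicit).
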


\begin{proof}
	Using \eqref{eq:M-total},
	\begin{equation*}
		M(t;v)=\sum_{s=0}^{t}\Tail_{t-s}(\Renew_s v).
	\end{equation*}
	The proof of Theorem~\ref{thm:fixed-r} used only the local bound for $\A_{r,s}$ and the operator-norm identity $\sum_s\A_{r,s}=\A^r$. By \textup{(\ref{R-renewal-local})}, the family $(\Renew_s)$ satisfies the analogous local bound and $\sum_s\Renew_s=\Renew$ in operator norm. Thus we do not sum the fixed-$r$ limits over $r$; instead, the same argument is applied after the completed-return counts have already been summed at the operator level. This gives
	\begin{equation*}
		M(t;v)=a_t\sum_{s\ge0}\ell(\Renew_s v)+o(a_t)
		=a_t\ell(\Renew v)+o(a_t),
	\end{equation*}
	which proves \eqref{eq:survival-asymp-expanded}. Since $\ell(\Renew v)>0$, the asymptotic equivalence \eqref{eq:survival-asymp} follows from \eqref{eq:survival-asymp-expanded}. The fixed-$r$ limit \eqref{eq:Nt-limit} follows by dividing Theorem~\ref{thm:fixed-r} by \eqref{eq:survival-asymp}. Finally,
	\begin{equation*}
		\sum_{r\ge0}\pi_r
		=
		\frac{\ell(\sum_{r\ge0}\A^r v)}{\ell(\Renew v)}=1.\qedhere
	\end{equation*}
\end{proof}

When the initial density is conditionally invariant for the killed induced system, the abstract limiting distribution becomes completely explicit. The following corollary is the deterministic analogue of the geometric killing law in the stochastic model.

\begin{corollary}[Geometric law for a conditionally invariant density]\label{cor:geometric}
	Assume Hypothesis~\ref{hyp:renewal}. Suppose that $h\in\B$ satisfies
	\begin{equation*}
		h\ge0,
		\qquad \int_Y h\,dm_Y=1,
		\qquad \A h=\lambda h,
		\qquad 0<\lambda<1,
		\qquad \ell(h)>0.
	\end{equation*}
	Then $h$ is conditionally invariant for the killed induced operator, and
	\begin{equation*}
		\mathbb P_h(N_t=r\mid\tau_H>t)
		\longrightarrow
		(1-\lambda)\lambda^r,
		\qquad r=0,1,2,\dots.
	\end{equation*}
\end{corollary}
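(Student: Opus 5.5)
The plan is to apply Theorem~\ref{thm:renewal-law} with $v=h$ and compute the limiting law explicitly from the eigenvalue relation. First, $h$ is conditionally invariant for the killed induced system directly from the definition: $\A h=\lambda h$ says that pushing $h\,dm_Y$ forward through one completed survivor return gives $\lambda$ times the original density. Equivalently, after normalizing, the conditioned density after one survivor return is again $h$. Here $\lambda=\int_Y \A h\,dm_Y$ is the one-return survival mass, since $\int_Y h\,dm_Y=1$.

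Next, iterate the eigenvalue relation to get $\A^r h=\lambda^r h$ for every $r\ge0$. By \textup{(\ref{R-operators})} the Neumann series for $\Renew=(I-\A)^{-1}$ converges in $\B$, so
\begin{equation*}
	\Renew h=\sum_{r\ge0}\lambda^r h=(1-\lambda)^{-1}h .
\end{equation*}
Alternatively, this follows from $(I-\A)h=(1-\lambda)h$ together with invertibility of $I-\A$. By continuity and linearity of $\ell$,
\begin{equation*}
	\ell(\Renew h)=\frac{\ell(h)}{1-\lambda}>0,
\end{equation*}
using $\ell(h)>0$ and $\lambda<1$. Hence the positivity condition \eqref{eq:positive-denominator} holds, and since $h\ge0$ lies in $\B$, Theorem~\ref{thm:renewal-law} applies.

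Finally, \eqref{eq:Nt-limit} gives
\begin{equation*}
	\pi_r=\frac{\ell(\A^r h)}{\ell(\Renew h)}=\frac{\lambda^r\ell(h)}{\ell(h)/(1-\lambda)}=(1-\lambda)\lambda^r .
\end{equation*}
No step is a genuine obstacle. The only point needing care is justifying $\Renew h=(1-\lambda)^{-1}h$, and that rests on \textup{(\ref{R-operators})} rather than on the spectral-radius bound alone. Note that $\lambda\le r(\A)<1$ is automatic there, but the stated assumption $\lambda<1$ already suffices.
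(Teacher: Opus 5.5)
Your proposal is correct and follows the paper's proof: iterate $\A h=\lambda h$ to get $\A^r h=\lambda^r h$ and $\Renew h=(1-\lambda)^{-1}h$, then substitute into the limit law of Theorem~\ref{thm:renewal-law}. You also explicitly verify the positivity condition $\ell(\Renew h)=\ell(h)/(1-\lambda)>0$ needed to invoke that theorem, a step the paper leaves implicit.
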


\begin{proof}
	Since $\A^r h =\lambda^r h$ and $\Renew h=(1-\lambda)^{-1}h$, Theorem~\ref{thm:renewal-law} gives
	\begin{equation*}
		\pi_r=
		\frac{\lambda^r\ell(h)}{(1-\lambda)^{-1}\ell(h)}
		=(1-\lambda)\lambda^r.\qedhere
	\end{equation*}
\end{proof}

\section{General observables and renewal rewards}\label{sec:observables}

We next add an observable to the survivor-renewal scheme. The completed survivor returns carry induced rewards, while the final unfinished excursion is handled by a separate rewarded tail functional.

For an observable $\psi:X\to\R$, write
\begin{equation}\label{eq:birkhoff-sum}
	S_n\psi\coloneqq\sum_{k=0}^{n-1}\psi\circ f^k.
\end{equation}
The reward accumulated during one return excursion is
\begin{equation}\label{eq:Psi-R}
	\Psi_R(y)\coloneqq S_{R(y)}\psi(y),
	\qquad R(y)<\infty.
\end{equation}
For $n\ge1$, define the completed-reward operator $\C_{\psi,n}$ by
\begin{equation}\label{eq:C-n}
	\C_{\psi,n}v
	\coloneqq
	\mathcal L_F(\1_{Y_n^{\mathrm{surv}}}\Psi_R v).
\end{equation}
When the series converges, let
\begin{equation}\label{eq:C-psi}
	\C_\psi\coloneqq\sum_{n\ge1}\C_{\psi,n}.
\end{equation}
Define the final-tail reward functional $\Tail_{\psi,t}$ by
\begin{equation}\label{eq:Tail-psi}
	\Tail_{\psi,t}(v)
	\coloneqq
	\int_{Y_{>t}^{\mathrm{surv}}} S_t\psi\,v\,dm_Y.
\end{equation}

To account for the position of the rewarded excursion within a sequence of survivor returns, define the elapsed-time reward operators $\D_{\psi,s}$ by
\begin{equation}\label{eq:D-s}
	\D_{\psi,s}
	\coloneqq
	\sum_{i+n+j=s}\Renew_i\C_{\psi,n}\Renew_j,
	\qquad s\ge0,
\end{equation}
where $n\ge1$ and $i,j\ge0$. Empty sums are zero, so $\D_{\psi,0}=0$.

The observable theory is organized in two stages: first a boundedness result, and then a convergence result under an additional tail-profile assumption.

\begin{hypothesis}[Observable renewal bounds]\label{hyp:reward-bound}
	Assume Hypothesis~\ref{hyp:renewal}. Let $\psi:X\to\R$ be an observable for which the reward operators and final-tail reward functionals above are defined. There are constants $C_{\psi,T},K_\psi,C_D<\infty$, bounded linear operators
	\[
		\C_{\psi,n}:\B\to\B,
		\qquad n\ge1,
	\]
	and bounded linear operators $\C_\psi,\D_\psi:\B\to\B$ such that the following conditions hold.
	\begin{subconditions}
		\item\label{B-final} \textbf{Final-tail reward bound.}
		For all $t\ge0$ and all $v\in\B$,
		\begin{equation*}
			\abs{\Tail_{\psi,t}(v)}\le C_{\psi,T} a_t\norm{v}_{\B}.
		\end{equation*}

		\item\label{B-completed} \textbf{Completed-reward operators.}
		The series defining $\C_\psi$ converges in operator norm on $\B$:
		\begin{equation*}
			\sum_{n\ge1}\C_{\psi,n}=\C_\psi.
		\end{equation*}
		Moreover,
		\begin{equation*}
			\norm{\C_{\psi,n}}_{\B\to\B}
			\le K_\psi(1+n)^{-1-\kappa}\log(n+2)
			\qquad(n\ge1).
		\end{equation*}

		\item\label{B-D-local} \textbf{Double-renewed reward local bound.}
		The operators $\D_{\psi,s}$ are bounded on $\B$, and the series $\sum_s\D_{\psi,s}$ converges in operator norm. Its sum satisfies
		\begin{equation*}
			\D_\psi=\Renew\C_\psi\Renew.
		\end{equation*}
		Moreover,
		\begin{equation*}
			\norm{\D_{\psi,s}}_{\B\to\B}
			\le C_D(1+s)^{-1-\kappa}\log(s+2)
			\qquad(s\ge1).
		\end{equation*}
	\end{subconditions}
\end{hypothesis}

\begin{remark}
	The logarithmic losses in \textup{(\ref{B-completed})} and \textup{(\ref{B-D-local})} are harmless for the survivor-scale estimates. Indeed, since $a_t\asymp(1+t)^{-\kappa}$, the large-$s$ part of the proof only requires
	\begin{equation}\label{eq:log-negligible}
		t^{-1-\kappa}\log(t+2)\sum_{u=0}^{\floor{t/2}}a_u=o(a_t),
	\end{equation}
	which holds for every $\kappa>0$. The reason for allowing these logarithmic losses is the PM application: a return interval passing near the neutral fixed point can contribute logarithmically to the accumulated observable.
\end{remark}

The next result concerns the survivor-conditioned expectation
\begin{equation*}
	\E_v[S_t\psi\mid\tau_H>t]
	=
	\frac{\int_{\{\tau_H>t\}}S_t\psi\,v\,dm_Y}
	{\mu_v(\{\tau_H>t\})}.
\end{equation*}
Under the domination estimates above, the numerator has the same order as the survival mass $M(t;v)=\mu_v(\{\tau_H>t\})$. This gives boundedness of the conditioned expectation without requiring a limiting profile for the final-tail reward.

\begin{theorem}[Boundedness of survivor-conditioned Birkhoff sums]\label{thm:observable-bounded}
	Assume Hypothesis~\ref{hyp:reward-bound}. Let $v\in\B$ be nonnegative and suppose $\ell(\Renew v)>0$. Then
	\begin{equation}\label{eq:numerator-bound}
		\int_{\{\tau_H>t\}}S_t\psi\,v\,dm_Y
		=
		O(a_t\norm{v}_{\B}),
	\end{equation}
	and consequently
	\begin{equation}\label{eq:observable-bounded}
		\sup_{t\ge1}\abs{\E_v[S_t\psi\mid\tau_H>t]}<\infty.
	\end{equation}
\end{theorem}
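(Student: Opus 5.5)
We decompose the numerator $\int_{\{\tau_H>t\}}S_t\psi\,v\,dm_Y$ according to the same renewal bookkeeping that produced \eqref{eq:M-total}. A surviving orbit segment of length $t$ consists of $r$ completed survivor returns of lengths $n_1,\dots,n_r$ with total $s\le t$, followed by a final unfinished excursion of length $t-s$. Along such an orbit, $S_t\psi$ splits additively as the sum of the rewards $\Psi_R$ over each completed excursion plus $S_{t-s}\psi$ over the final segment.

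\textbf{Step 1: the decomposition.} We attach the reward to each piece in turn. If the reward sits in the final segment, summing over itineraries gives
\[
\sum_{s=0}^t\Tail_{\psi,t-s}(\Renew_s v).
\]
If the reward sits in the $k$-th completed excursion, of length $n$, then the excursions before it have total length $j$ and the excursions after it have total length $i$. Summing over the numbers of returns before and after yields the operator $\Renew_i\C_{\psi,n}\Renew_j$, followed by the plain tail functional $\Tail_{t-s}$ with $s=i+n+j$. Altogether
\begin{equation*}
\int_{\{\tau_H>t\}}S_t\psi\,v\,dm_Y=\sum_{s=0}^{t}\Tail_{\psi,t-s}(\Renew_s v)+\sum_{s=1}^{t}\Tail_{t-s}(\D_{\psi,s}v).
\end{equation*}
Justifying this identity is the main bookkeeping step. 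It uses the duality \eqref{eq:transfer-duality}, the fact that $\Psi_R\circ F^{k}$ is evaluated on the $k$-th excursion, and Fubini to interchange the sums. Absolute convergence of the sums is supplied by the operator-norm bounds in Hypothesis~\ref{hyp:reward-bound}.

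\textbf{Step 2: the final-tail reward term.} By (\ref{B-final}) and (\ref{R-renewal-local}),
\[
\abs{\Tail_{\psi,t-s}(\Renew_s v)}\le C a_{t-s}(1+s)^{-1-\kappa}\norm{v}_{\B}.
\]
We split the sum at $s=t/2$, exactly as in the proof of Theorem~\ref{thm:fixed-r}.
\begin{itemize}
\item For $s\le t/2$, we have $a_{t-s}\le C_a a_t$, and $\sum_s(1+s)^{-1-\kappa}<\infty$, so this part is $O(a_t\norm v)$.
\item For $s>t/2$, the bound is $Ct^{-1-\kappa}\sum_{u\le t/2}a_u\norm v$, which is $O(a_t\norm v)$, and in fact $o(a_t)$.
\end{itemize}

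\textbf{Step 3: the completed-reward term.} Here (\ref{R-tail}) and (\ref{B-D-local}) give
\[
\abs{\Tail_{t-s}(\D_{\psi,s}v)}\le C a_{t-s}(1+s)^{-1-\kappa}\log(s+2)\norm v.
\]
We use the same split at $s=t/2$. For $s\le t/2$, the sum is bounded by $C a_t\norm v$, because $\sum_s(1+s)^{-1-\kappa}\log(s+2)<\infty$. For $s>t/2$, the sum is controlled by \eqref{eq:log-negligible}.

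\textbf{Step 4: conclusion.} Steps 2 and 3 together give \eqref{eq:numerator-bound}. By Theorem~\ref{thm:renewal-law}, the denominator satisfies $M(t;v)\sim a_t\ell(\Renew v)$ with $\ell(\Renew v)>0$, so the ratio $\E_v[S_t\psi\mid\tau_H>t]$ is bounded for all large $t$. For the finitely many remaining $t\ge1$, we need $M(t;v)>0$. This holds because $M(t;v)$ is nonincreasing in $t$ and is positive for large $t$. Each of these finitely many numerators is finite, since it is bounded by the $O(a_t\norm v)$ estimate, so the expectation is finite at every $t\ge1$. Hence the supremum in \eqref{eq:observable-bounded} is finite.
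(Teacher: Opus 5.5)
Your proof is correct and is essentially the paper's argument. You use the same splitting of the numerator into $\sum_{s}\Tail_{\psi,t-s}(\Renew_s v)$ and $\sum_{s}\Tail_{t-s}(\D_{\psi,s}v)$, the same split of each sum at $s=t/2$ via \textup{(\ref{B-final})}, \textup{(\ref{R-renewal-local})}, \textup{(\ref{R-tail})}, \textup{(\ref{B-D-local})} and \eqref{eq:log-negligible}, and the same appeal to $M(t;v)\sim a_t\ell(\Renew v)$. Your remark that $M(t;v)$ is nonincreasing, hence positive at the finitely many small $t$, makes explicit a point the paper leaves implicit. One small correction: for fixed $t$ the itinerary sums are finite, since every return has length at least $1$, so the Fubini step needs no absolute-convergence argument.
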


\begin{proof}
	Let
	\begin{equation*}
		I_\psi(t;v)\coloneqq\int_{\{\tau_H>t\}}S_t\psi\,v\,dm_Y.
	\end{equation*}
	For $x\in\{\tau_H>t\}$, let $\sigma_t(x)$ be the total length of the completed survivor-return blocks before time $t$. Thus, if the completed survivor-return lengths before time $t$ are $n_1,\dots,n_r$, then
	\begin{equation*}
		\sigma_t(x)=n_1+\cdots+n_r.
	\end{equation*}
	We split the Birkhoff sum into the part accumulated during completed survivor-return blocks and the part accumulated during the final unfinished excursion:
	\begin{equation*}
		S_t\psi(x)
		=
		\sum_{j=0}^{\sigma_t(x)-1}\psi(f^j x)
		+
		\sum_{j=\sigma_t(x)}^{t-1}\psi(f^j x).
	\end{equation*}
	Accordingly, define
	\begin{align}
		F_\psi(t;v)
		 & \coloneqq
		\int_{\{\tau_H>t\}}
		\sum_{j=0}^{\sigma_t(x)-1}\psi(f^j x)\,v\,dm_Y,\label{eq:F-psi-integral} \\
		G_\psi(t;v)
		 & \coloneqq
		\int_{\{\tau_H>t\}}
		\sum_{j=\sigma_t(x)}^{t-1}\psi(f^j x)\,v\,dm_Y.\label{eq:G-psi-integral}
	\end{align}
	Then
	\begin{equation}\label{eq:observable-numerator-split}
		I_\psi(t;v)=F_\psi(t;v)+G_\psi(t;v).
	\end{equation}

	The renewal decomposition gives operator representations for these two terms. The completed-return contribution is
	\begin{equation}\label{eq:F-psi-def}
		F_\psi(t;v)
		=
		\sum_{s=0}^{t}\Tail_{t-s}(\D_{\psi,s}v).
	\end{equation}
	Here $\D_{\psi,s}v$ collects all survivor-return itineraries whose total completed-return length is $s$, with one completed return weighted by its $\psi$-reward. The remaining segment has length $t-s$, so the ordinary tail functional $\Tail_{t-s}$ checks survival through the final unfinished excursion.

	Similarly, the final-excursion contribution is
	\begin{equation}\label{eq:G-psi-def}
		G_\psi(t;v)
		=
		\sum_{s=0}^{t}\Tail_{\psi,t-s}(\Renew_s v).
	\end{equation}
	Here $\Renew_s v$ is the density obtained after summing over all survivor-return itineraries of total completed-return length $s$, and $\Tail_{\psi,t-s}$ records the $\psi$-reward accumulated during the final unfinished excursion.

	We first estimate $F_\psi(t;v)$. For $s\le t/2$, the final-tail bound in \textup{(\ref{R-tail})}, polynomial comparability of $a_t$, and \textup{(\ref{B-D-local})} give
	\begin{equation*}
		\abs{\Tail_{t-s}(\D_{\psi,s}v)}
		\le
		Ca_t(1+s)^{-1-\kappa}\log(s+2)\norm{v}_{\B}.
	\end{equation*}
	The sequence on the right is summable in $s$, so the contribution from $s\le t/2$ is $O(a_t\norm{v}_{\B})$. For $s>t/2$, using \textup{(\ref{R-tail})} and \textup{(\ref{B-D-local})} again gives
	\begin{equation*}
		\sum_{t/2<s\le t}\abs{\Tail_{t-s}(\D_{\psi,s}v)}
		\le
		C\norm{v}_{\B}\,
		t^{-1-\kappa}\log(t+2)
		\sum_{u=0}^{\floor{t/2}}a_u
		=
		o(a_t\norm{v}_{\B}),
	\end{equation*}
	by \eqref{eq:log-negligible}. Hence
	\begin{equation}\label{eq:completed-reward-bound}
		F_\psi(t;v)=O(a_t\norm{v}_{\B}).
	\end{equation}

	We next estimate $G_\psi(t;v)$. For $s\le t/2$, the final-tail reward bound in \textup{(\ref{B-final})} and polynomial comparability of $a_t$ give
	\begin{equation*}
		\abs{\Tail_{\psi,t-s}(\Renew_s v)}
		\le
		Ca_t\norm{\Renew_s v}_{\B}.
	\end{equation*}
	Since $\sum_s\Renew_s$ converges in operator norm by \textup{(\ref{R-renewal-local})}, the contribution from $s\le t/2$ is $O(a_t\norm{v}_{\B})$. For $s>t/2$, the same final-tail reward bound and \textup{(\ref{R-renewal-local})} give, after writing $u=t-s$,
	\begin{equation*}
		\sum_{t/2<s\le t}\abs{\Tail_{\psi,t-s}(\Renew_s v)}
		\le
		C\norm{v}_{\B}
		\sum_{u=0}^{\floor{t/2}}a_u(1+t-u)^{-1-\kappa}
		=
		o(a_t\norm{v}_{\B}).
	\end{equation*}
	Therefore
	\begin{equation}\label{eq:final-reward-bound}
		G_\psi(t;v)=O(a_t\norm{v}_{\B}).
	\end{equation}

	Combining \eqref{eq:observable-numerator-split}, \eqref{eq:completed-reward-bound}, and \eqref{eq:final-reward-bound} gives \eqref{eq:numerator-bound}. The survival asymptotic \eqref{eq:survival-asymp} and the assumption $\ell(\Renew v)>0$ then give \eqref{eq:observable-bounded}.
\end{proof}

For nonnegative observables, Theorem~\ref{thm:observable-bounded} also controls shorter partial sums.

\begin{corollary}[Nonnegative two-parameter form]\label{cor:two-parameter}
	Under the hypotheses of Theorem~\ref{thm:observable-bounded}, if $\psi\ge0$, then
	\begin{equation*}
		\sup_{t\ge1}\sup_{0\le n\le t}
		\E_v[S_n\psi\mid\tau_H>t]<\infty.
	\end{equation*}
\end{corollary}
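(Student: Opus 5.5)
The argument is pure monotonicity. Since $\psi\ge0$, the partial sums are ordered pointwise: for every $x$ and every $0\le n\le t$,
\begin{equation*}
	0\le S_n\psi(x)=\sum_{k=0}^{n-1}\psi(f^kx)\le\sum_{k=0}^{t-1}\psi(f^kx)=S_t\psi(x).
\end{equation*}
I would first restrict this to the survivor set $\{\tau_H>t\}$, multiply by the nonnegative density $v$, and integrate against $m_Y$. This gives, for every $0\le n\le t$,
\begin{equation*}
	0\le\int_{\{\tau_H>t\}}S_n\psi\,v\,dm_Y\le\int_{\{\tau_H>t\}}S_t\psi\,v\,dm_Y.
\end{equation*}
Measurability and integrability of $S_n\psi\,v\1_{\{\tau_H>t\}}$ follow from this domination, since the right-hand integrand is integrable by the setup of Theorem~\ref{thm:observable-bounded}. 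In particular the numerator bound \eqref{eq:numerator-bound} holds there.

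Next I divide by the survival mass $M(t;v)=\mu_v(\{\tau_H>t\})$. By \eqref{eq:survival-asymp} and $\ell(\Renew v)>0$, this mass is positive for all sufficiently large $t$. The result is
\begin{equation*}
	0\le\E_v[S_n\psi\mid\tau_H>t]\le\E_v[S_t\psi\mid\tau_H>t]
\end{equation*}
for all $0\le n\le t$. Taking $\sup_{0\le n\le t}$ and then $\sup_t$, and using \eqref{eq:observable-bounded}, gives the claim.

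The only delicate point is the range of $t$. The supremum in the corollary runs over all $t\ge1$, whereas \eqref{eq:survival-asymp} only guarantees $M(t;v)>0$ for $t\ge t_0$. I would handle this the same way the statement of Theorem~\ref{thm:observable-bounded} implicitly does. For the finitely many $t<t_0$ with $M(t;v)>0$, each conditional expectation is a finite quantity: it is bounded by $\E_v[S_t\psi\mid\tau_H>t]$, which is already covered by \eqref{eq:observable-bounded}. Times with $M(t;v)=0$ are excluded, since the conditional expectation is undefined there. No other step needs new input, because the estimate reduces entirely to Theorem~\ref{thm:observable-bounded}.
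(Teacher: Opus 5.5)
Your proposal is correct and is exactly the paper's argument: since $\psi\ge0$, the pointwise bound $0\le S_n\psi\le S_t\psi$ on $\{\tau_H>t\}$, integrated against the nonnegative density $v$ and divided by the survival mass, reduces the claim to Theorem~\ref{thm:observable-bounded}. Your extra care about small $t$, including excluding times with $\mu_v(\{\tau_H>t\})=0$, only makes explicit a point the paper leaves implicit.
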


\begin{proof}
	For $0\le n\le t$, $S_n\psi\le S_t\psi$ on $\{\tau_H>t\}$. The claim follows from Theorem~\ref{thm:observable-bounded}.
\end{proof}

The preceding theorem gives only the uniform bound
\begin{equation*}
	\E_v[S_t\psi\mid\tau_H>t]=O(1).
\end{equation*}
It does not identify the limit, if any, of these survivor-conditioned expectations as $t\to\infty$. The completed-return contribution can be handled by the operator estimates in Hypothesis~\ref{hyp:reward-bound}; the remaining issue is the reward accumulated during the final unfinished excursion. To obtain convergence, that final-tail reward must have a normalized asymptotic on the same scale as the survival mass.

\begin{hypothesis}[Convergent reward hypotheses]\label{hyp:reward}
	Assume Hypothesis~\ref{hyp:reward-bound}. In addition, there is a continuous functional $\ell_\psi:\B\to\R$ such that, for every $v\in\B$,
	\begin{equation}\label{eq:reward-asymptotic-hyp}
		\Tail_{\psi,t}(v)=a_t\ell_\psi(v)+o(a_t).
	\end{equation}
\end{hypothesis}

With this extra asymptotic, the two reward contributions have separate limits: one from the completed survivor-return blocks and one from the final unfinished excursion.

\begin{theorem}[Limit of survivor-conditioned Birkhoff sums]\label{thm:observable-limit}
	Assume Hypothesis~\ref{hyp:reward}. Let $v\in\B$ be nonnegative and suppose $\ell(\Renew v)>0$. Then
	\begin{equation}\label{eq:numerator-asymp}
		\int_{\{\tau_H>t\}}S_t\psi\,v\,dm_Y
		=
		a_t\left[\ell_\psi(\Renew v)+\ell(\D_\psi v)\right]+o(a_t).
	\end{equation}
	Consequently,
	\begin{equation}\label{eq:observable-limit}
		\E_v[S_t\psi\mid\tau_H>t]
		\longrightarrow
		\frac{\ell_\psi(\Renew v)+\ell(\D_\psi v)}{\ell(\Renew v)}.
	\end{equation}
\end{theorem}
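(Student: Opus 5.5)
We reuse the decomposition from the proof of Theorem~\ref{thm:observable-bounded}. We write the numerator as $I_\psi(t;v)=F_\psi(t;v)+G_\psi(t;v)$, with the operator representations \eqref{eq:F-psi-def} and \eqref{eq:G-psi-def}:
\begin{equation*}
	F_\psi(t;v)=\sum_{s=0}^{t}\Tail_{t-s}(\D_{\psi,s}v),
	\qquad
	G_\psi(t;v)=\sum_{s=0}^{t}\Tail_{\psi,t-s}(\Renew_s v).
\end{equation*}
We then run the cut-off argument of Theorem~\ref{thm:fixed-r} separately on each sum. The goal is to show
\begin{equation*}
	F_\psi(t;v)/a_t\to\ell(\D_\psi v),
	\qquad
	G_\psi(t;v)/a_t\to\ell_\psi(\Renew v).
\end{equation*}
Dividing by the survival asymptotic \eqref{eq:survival-asymp} of Theorem~\ref{thm:renewal-law}, which is legitimate because $\ell(\Renew v)>0$, then gives \eqref{eq:observable-limit}.

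For $F_\psi$ we fix $h$ and split the sum into $s\le h$, $h<s\le t/2$ and $t/2<s\le t$.
\begin{itemize}
	\item On the finite block $s\le h$, each $\D_{\psi,s}$ is bounded on $\B$ by (\ref{B-D-local}). The final-tail asymptotic (\ref{R-tail}) together with $a_{t-s}/a_t\to1$ then gives $a_t^{-1}\Tail_{t-s}(\D_{\psi,s}v)\to\ell(\D_{\psi,s}v)$.
	\item On $h<s\le t/2$, the bound $a_{t-s}\le C_a a_t$, the uniform tail bound in (\ref{R-tail}) and (\ref{B-D-local}) dominate the contribution by
	\begin{equation*}
		C\norm{v}_{\B}\sum_{s>h}(1+s)^{-1-\kappa}\log(s+2),
	\end{equation*}
	which vanishes as $h\to\infty$.
	\item On $t/2<s\le t$, the contribution is $o(a_t)$ by \eqref{eq:log-negligible}, exactly as in the proof of Theorem~\ref{thm:observable-bounded}.
\end{itemize}
Letting $t\to\infty$ and then $h\to\infty$ gives $\sum_{s\ge0}\ell(\D_{\psi,s}v)$. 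Since $\sum_s\D_{\psi,s}=\D_\psi$ in operator norm (with $\D_\psi=\Renew\C_\psi\Renew$) and $\ell$ is continuous, this sum equals $\ell(\D_\psi v)$.

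For $G_\psi$ the argument is identical, with two substitutions. On the finite block we use the new reward asymptotic \eqref{eq:reward-asymptotic-hyp} applied to the fixed vectors $\Renew_s v$, which gives $a_t^{-1}\Tail_{\psi,t-s}(\Renew_s v)\to\ell_\psi(\Renew_s v)$. On the two remaining ranges we use the uniform reward bound (\ref{B-final}) together with the renewal local bound (\ref{R-renewal-local}); the large-$s$ range is $o(a_t)$ by the same computation as before. The limit is $\sum_s\ell_\psi(\Renew_s v)=\ell_\psi(\Renew v)$, using operator-norm convergence of $\sum_s\Renew_s$ and continuity of $\ell_\psi$.

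The analytic steps are routine copies of earlier estimates. The point that needs care is the two combinatorial identities \eqref{eq:F-psi-def} and \eqref{eq:G-psi-def}, that is, the claim that $\D_{\psi,s}$ correctly encodes the completed-return rewards. I would check this by expanding the completed part of $S_t\psi$ as a sum over the position of the rewarded block among the completed returns. A block of length $n$ preceded by returns of total length $i$ and followed by returns of total length $j$ corresponds to the operator $\Renew_j\C_{\psi,n}\Renew_i$, and the convention that the density is pushed forward in time order fixes the composition order. Summing over $i+n+j=s$ gives $\D_{\psi,s}$. This is the identity already used in Theorem~\ref{thm:observable-bounded}, so it may be quoted.

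One subtlety remains: the ordering of the sum, which is unconditional only because of the norm bounds. Interchanging the itinerary sum with integration is justified by the absolute operator-norm convergence in (\ref{B-completed}) and (\ref{B-D-local}), or by splitting $\psi=\psi^+-\psi^-$ if needed.
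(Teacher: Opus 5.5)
Your proposal is correct and follows the paper's proof essentially step for step. You use the same split $I_\psi=F_\psi+G_\psi$, the same cut-offs at $h$ and at $t/2$ via \textup{(\ref{R-tail})}, \textup{(\ref{B-D-local})}, \textup{(\ref{B-final})}, \textup{(\ref{R-renewal-local})} and \eqref{eq:log-negligible}, then operator-norm summation and division by \eqref{eq:survival-asymp}. Two minor points: your ordering $\Renew_j\C_{\psi,n}\Renew_i$ agrees with the paper's $\D_{\psi,s}$ after relabeling $i\leftrightarrow j$, and your interchange worry is moot, since for fixed $t$ all the itinerary sums are finite.
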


\begin{proof}
	Hypothesis~\ref{hyp:reward} includes Hypothesis~\ref{hyp:reward-bound}, so the decomposition from the proof of Theorem~\ref{thm:observable-bounded} applies. Write
	\begin{equation*}
		I_\psi(t;v)
		\coloneqq
		\int_{\{\tau_H>t\}}S_t\psi\,v\,dm_Y
		=
		F_\psi(t;v)+G_\psi(t;v),
	\end{equation*}
	where
	\begin{equation*}
		F_\psi(t;v)=\sum_{s=0}^{t}\Tail_{t-s}(\D_{\psi,s}v)
	\end{equation*}
	is the completed-return contribution, and
	\begin{equation*}
		G_\psi(t;v)=\sum_{s=0}^{t}\Tail_{\psi,t-s}(\Renew_s v)
	\end{equation*}
	is the contribution from the final unfinished excursion.

	We first treat the completed-return contribution. Fix $h\ge0$. For each fixed $s\le h$, the final-tail asymptotic in \textup{(\ref{R-tail})} and the fixed-shift relation $a_{t-s}/a_t\to1$ give
	\begin{equation*}
		\frac{1}{a_t}\Tail_{t-s}(\D_{\psi,s}v)
		\longrightarrow
		\ell(\D_{\psi,s}v),
		\qquad t\to\infty.
	\end{equation*}
	Thus the finite sums converge:
	\begin{equation*}
		\frac{1}{a_t}\sum_{s=0}^{h}\Tail_{t-s}(\D_{\psi,s}v)
		\longrightarrow
		\sum_{s=0}^{h}\ell(\D_{\psi,s}v).
	\end{equation*}
	The error from $s>h$ is controlled exactly as in the proof of Theorem~\ref{thm:observable-bounded}. Splitting at $t/2$ and using \textup{(\ref{R-tail})} and \textup{(\ref{B-D-local})} gives
	\begin{equation*}
		\limsup_{t\to\infty}\frac1{a_t}
		\sum_{h<s\le t}
		\abs{\Tail_{t-s}(\D_{\psi,s}v)}
		\le
		C\norm{v}_{\B}\sum_{s>h}(1+s)^{-1-\kappa}\log(s+2).
	\end{equation*}
	The right-hand side tends to zero as $h\to\infty$. Since $\sum_s\D_{\psi,s}=\D_\psi$ in operator norm and $\ell$ is continuous, we obtain
	\begin{equation}\label{eq:completed-reward}
		\frac{F_\psi(t;v)}{a_t}
		\longrightarrow
		\ell(\D_\psi v).
	\end{equation}

	We next treat the final unfinished excursion. For each fixed $s\le h$, the final-tail reward asymptotic \eqref{eq:reward-asymptotic-hyp} and the fixed-shift relation $a_{t-s}/a_t\to1$ give
	\begin{equation*}
		\frac{1}{a_t}\Tail_{\psi,t-s}(\Renew_s v)
		\longrightarrow
		\ell_\psi(\Renew_s v),
		\qquad t\to\infty.
	\end{equation*}
	Hence
	\begin{equation*}
		\frac{1}{a_t}\sum_{s=0}^{h}\Tail_{\psi,t-s}(\Renew_s v)
		\longrightarrow
		\sum_{s=0}^{h}\ell_\psi(\Renew_s v).
	\end{equation*}
	For the tail $s>h$, the estimates used for $G_\psi$ in Theorem~\ref{thm:observable-bounded}, now with \textup{(\ref{B-final})} and \textup{(\ref{R-renewal-local})}, give
	\begin{equation*}
		\limsup_{t\to\infty}\frac1{a_t}
		\sum_{h<s\le t}
		\abs{\Tail_{\psi,t-s}(\Renew_s v)}
		\le
		C\norm{v}_{\B}\sum_{s>h}(1+s)^{-1-\kappa},
	\end{equation*}
	which tends to zero as $h\to\infty$. Since $\sum_s\Renew_s=\Renew$ in operator norm and $\ell_\psi$ is continuous,
	\begin{equation}\label{eq:final-reward}
		\frac{G_\psi(t;v)}{a_t}
		\longrightarrow
		\ell_\psi(\Renew v).
	\end{equation}

	Combining \eqref{eq:completed-reward} and \eqref{eq:final-reward} gives \eqref{eq:numerator-asymp}. The survival asymptotic \eqref{eq:survival-asymp} and the assumption $\ell(\Renew v)>0$ give \eqref{eq:observable-limit}.
\end{proof}

\section{Entropy of survivor return-length names}\label{sec:entropy}

We use $\Sh$ for Shannon entropy of countable probability vectors: if $p=(p_i)$, then
\begin{equation}\label{eq:shannon-entropy}
	\Sh(p)\coloneqq -\sum_i p_i\log p_i,
\end{equation}
with the convention $0\log0=0$. If $v\in\B$ is nonnegative and $\mu_v(A)>0$, then $\Sh_v(\mathcal P\given A)$ denotes the Shannon entropy of the atom probabilities of $\mathcal P$ under the conditional measure $\mu_v(\cdot\mid A)$.

We now record a symbolic consequence for the return-length names of survivor orbits. If one instead codes by individual survivor-return branches, the entropy hypothesis below must be imposed at the branch level rather than only at the return-length level.

\begin{hypothesis}[Entropy hypotheses]\label{hyp:entropy}
	Assume Hypothesis~\ref{hyp:renewal}. The return-length entropy estimates satisfy the following conditions.
	\begin{subconditions}
		\item\label{E-one-step-domination} \textbf{Uniform one-step entropy domination.}
		There is a probability vector $q=(q_n)_{n\ge1}$ with $\Sh(q)<\infty$ and a constant $C_E<\infty$ such that every one-step survivor-return length distribution which appears before the final tail in the renewal decomposition is dominated by $C_Eq$. Equivalently, for each normalized nonnegative density $w\in\B$ occurring as a conditional density at the start of a completed survivor return, the conditional probabilities
		\begin{equation*}
			p_n(w)
			\coloneqq
			\frac{\int_{Y_n^{\mathrm{surv}}}w\,dm_Y}{\sum_{k\ge1}\int_{Y_k^{\mathrm{surv}}}w\,dm_Y}
		\end{equation*}
		satisfy $p_n(w)\le C_E q_n$ whenever the denominator is nonzero.

		\item\label{E-return-count-domination} \textbf{Uniform return-count domination.}
		For every nonnegative $v\in\B$ with $\ell(\Renew v)>0$, there are constants $C_v<\infty$ and $0<\theta_v<1$ such that
		\begin{equation*}
			\mathbb P_v(N_t\ge r\mid \tau_H>t)
			\le C_v\theta_v^r
		\end{equation*}
		for all $r\ge0$ and all sufficiently large $t$.
	\end{subconditions}
\end{hypothesis}

\begin{theorem}[Zero entropy rate for survivor return-length names]\label{thm:entropy}
	Assume Hypothesis~\ref{hyp:entropy}. Let $v\in\B$ be nonnegative and suppose $\ell(\Renew v)>0$. For each $t\ge1$, let $\mathcal P_t^{\mathrm{len}}$ be the partition of $\{\tau_H>t\}$ according to the completed survivor-return lengths before time $t$ and the final residual time. Then
	\begin{equation}\label{eq:entropy-log}
		\Sh_v(\mathcal P_t^{\mathrm{len}}\given\tau_H>t)=O(\log t),
	\end{equation}
	and consequently
	\begin{equation}\label{eq:entropy-rate}
		\frac1t\Sh_v(\mathcal P_t^{\mathrm{len}}\given\tau_H>t)\longrightarrow0.
	\end{equation}
\end{theorem}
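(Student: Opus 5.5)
Write the partition as the joint name $(N_t, n_1,\dots,n_{N_t}, u)$, where $u=t-\sigma_t$ is the final residual time. The plan is to apply the chain rule for Shannon entropy, revealing the name in the order $N_t$, then the return lengths one at a time, then $u$. Throughout, $\mu=\mu_v(\cdot\given\tau_H>t)$. We get
\begin{equation*}
	\Sh_v(\mathcal P_t^{\mathrm{len}}\given\tau_H>t)
	\le \Sh_\mu(N_t)+\Sh_\mu(n_1,\dots,n_{N_t}\given N_t)+\Sh_\mu(u\given N_t,n_1,\dots,n_{N_t}).
\end{equation*}
Since $u=t-(n_1+\cdots+n_{N_t})$, the last term vanishes: $u$ is determined by the other coordinates. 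One could instead bound it by $\log(t+1)$, since $u$ takes at most $t+1$ values, and this is where the $O(\log t)$ would enter if we kept it. The key steps are: (i) bound $\Sh_\mu(N_t)$ uniformly in $t$; (ii) bound the conditional entropy of the return lengths by $C\,\E_\mu[N_t]+O(1)$; (iii) use (\ref{E-return-count-domination}) to see that $\E_\mu[N_t]$ is bounded.

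For step (i), (\ref{E-return-count-domination}) gives $\mu(N_t=r)\le C_v\theta_v^r$ for large $t$. A distribution on $\{0,1,\dots\}$ dominated by a geometric sequence has entropy bounded in terms of $C_v,\theta_v$ only. To see this, split $-\sum p_r\log p_r$ into the terms with $p_r\ge e^{-r}$, which contribute at most $\sum_r r\,C_v\theta_v^r$, and those with $p_r<e^{-r}$, which contribute at most $\sum_r r e^{-r}+O(1)$ because $x\mapsto -x\log x$ is increasing near $0$. The same tail bound gives $\E_\mu[N_t]\le \sum_r C_v\theta_v^r<\infty$, which is step (iii). For small $t$ everything is finite anyway.

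Step (ii) is the main obstacle, because conditioning on survival to time $t$ distorts the one-step return-length laws. We reveal $n_1,n_2,\dots$ sequentially. On $\{N_t\ge k\}$, the conditional law of $n_k$ given $n_1,\dots,n_{k-1}$ and the survival event is not exactly $p_\cdot(w)$ for the pushed density $w$, because it is also reweighted by the future survival factor. I would first try to read (\ref{E-one-step-domination}) as stated ("every one-step distribution appearing before the final tail in the renewal decomposition is dominated by $C_Eq$"), which includes these conditioned laws. Granting this, we use the elementary bound: if $p\le C_Eq$ pointwise, then
\begin{equation*}
	\Sh(p)\le \Sh(q)+\log C_E+\textstyle\sum_n q_n\log(1/q_n)\text{-type corrections},
\end{equation*}
or more cleanly $\Sh(p)\le C_E\Sh(q)+C_E e^{-1}$-style bounds. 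These follow from splitting by whether $p_n\ge q_n^2$ or not, using $\Sh(q)<\infty$. This yields a constant $K=K(C_E,q)$ with each revealed step costing at most $K$. By the chain rule, the total is at most $K\sum_{k\ge1}\mu(N_t\ge k)=K\,\E_\mu[N_t]$, which is uniformly bounded. The residual length, when not determined, costs at most $\log(t+1)$. If the hypothesis must be used only for unconditioned densities $w$, the fallback is to bound the conditional law of $n_k$ by ratios of survival masses, using Theorem~\ref{thm:renewal-law} and the local bounds (\ref{R-renewal-local}). This should give domination up to a constant factor, possibly with a polynomial-in-$t$ factor whose logarithm is still $O(\log t)$ per step. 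Combined with the exponential tail of $N_t$, that keeps the total at $O(\log t)$.

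Summing the three contributions gives \eqref{eq:entropy-log}. Dividing by $t$ gives \eqref{eq:entropy-rate}.
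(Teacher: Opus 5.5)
Your proposal is correct and follows the paper's own proof: a chain rule over $N_t$, the completed return lengths and the residual; $\Sh_v(N_t\given\tau_H>t)$ and $\E_v[N_t\mid\tau_H>t]$ are bounded via \textup{(\ref{E-return-count-domination})}, and each revealed length costs at most a constant via \textup{(\ref{E-one-step-domination})}, read as covering the survival-conditioned one-step laws, which is exactly how the paper implicitly reads it (the cleanest constant is $\Sh(p)\le-\sum_n p_n\log q_n\le C_E\Sh(q)$ by Gibbs' inequality). Your remark that the residual $t-\sigma_t$ is determined by the lengths is a valid sharpening, since it makes the $\log(t+1)$ term superfluous and gives $O(1)$; the fallback paragraph is unnecessary and would not go through as sketched for general $q$ with $\Sh(q)<\infty$, because domination by a $t$-dependent multiple $Dq$ does not by itself give an $O(\log D)$ entropy bound.
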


\begin{proof}
	It is enough to prove \eqref{eq:entropy-log} for all sufficiently large $t$; changing finitely many values does not affect the $O(\log t)$ estimate. By \textup{(\ref{E-return-count-domination})}, the conditional laws of $N_t$ have uniformly exponentially decaying tails. Hence
	\begin{equation*}
		\sup_t \E_v[N_t\mid\tau_H>t]<\infty,
		\qquad
		\sup_t \Sh_v(N_t\given \tau_H>t)<\infty,
	\end{equation*}
	where the suprema are over large $t$.

	Given $N_t=r$, a return-length name consists of $r$ completed survivor-return lengths and one final residual time in $\{0,1,\dots,t\}$. The residual time therefore contributes at most $\log(t+1)$ conditional entropy. The domination in \textup{(\ref{E-one-step-domination})}, together with $\Sh(q)<\infty$, gives a uniform constant $K_E<\infty$ such that each completed return length contributes at most $K_E$ conditional entropy. Applying the entropy chain rule gives
	\begin{equation*}
		\Sh_v(\mathcal P_t^{\mathrm{len}}\given \tau_H>t)
		\le
		\Sh_v(N_t\given \tau_H>t)
		+K_E\E_v[N_t\mid\tau_H>t]
		+\log(t+1).
	\end{equation*}
	The first two terms are uniformly bounded in $t$, so \eqref{eq:entropy-log} follows. Dividing by $t$ gives \eqref{eq:entropy-rate}.
\end{proof}

\section{Pomeau--Manneville maps and regular holes}\label{sec:mp}

We now formulate a concrete intermittent class for which the abstract hypotheses above can be verified, and hence to which the preceding survivor-renewal results apply. The map hypotheses are the generalized Pomeau--Manneville hypotheses used in the author's earlier work; the hole hypotheses isolate the additional open-system regularity needed for these results.

There are two distinct conclusions in the application. The two-sided neutral estimates are sufficient for domination estimates and hence for bounded survivor-conditioned rewards. Convergence of conditioned expectations requires additional asymptotic regularity near the neutral fixed point, because the final-tail reward must have a normalized limit rather than just the right order of magnitude.

\begin{definition}[Generalized Pomeau--Manneville map]\label{def:generalized-mp-map}
	Let $\gamma>0$. We say that $f:[0,1]\to[0,1]$ is a generalized Pomeau--Manneville map with exponent $\gamma$ if $f$ is a two-branched interval map and there exists $r_1\in(0,1)$ such that the following conditions hold, using one-sided derivatives as needed:
	\begin{subconditions}
		\item\label{MP-full-branches} $f((0,r_1))=f((r_1,1))=(0,1)$.

		\item\label{MP-regularity} $f$ is $C^1$ on $[0,r_1)$ and $C^2$ on $(0,r_1)\cup(r_1,1]$.

		\item\label{MP-expansion} $f(0)=f(r_1)=0$, $f'(0)=1$, and $\abs{f'}>1$ on $(0,1]$ away from partition boundaries.

		\item\label{MP-bounded-derivatives} $\sup_{x\in[0,1]}\abs{f'(x)}<\infty$ and $\sup_{x\in(r_1,1]}\abs{f''(x)}<\infty$.

		\item\label{MP-neutral-curvature} There is $C_f\ge1$ such that, for every $x\in(0,r_1)$,
		\begin{equation*}
			C_f^{-1}\le \frac{f''(x)}{x^{\gamma-1}}\le C_f.
		\end{equation*}
	\end{subconditions}
\end{definition}

The next definition is only needed for convergence statements. It rules out oscillatory neutral coefficients which preserve two-sided PM estimates but may destroy final-tail reward limits.

\begin{definition}[Asymptotically regular behavior near the neutral fixed point]\label{def:asymptotically-regular}
	A generalized Pomeau--Manneville map has asymptotically regular behavior near the neutral fixed point if there is $c_f>0$ such that
	\begin{equation}\label{eq:neutral-asymptotic}
		\frac{f''(x)}{x^{\gamma-1}}\longrightarrow c_f
		\qquad(x\to0).
	\end{equation}
\end{definition}

\begin{remark}
	Condition \eqref{eq:neutral-asymptotic} is not needed for the renewal law or for boundedness of Lyapunov stretching. It is used only when one wants convergence of reward averages for observables whose leading contribution comes from the neutral branch. Under \eqref{eq:neutral-asymptotic},
	\begin{equation*}
		f'(x)-1\sim \frac{c_f}{\gamma}x^\gamma,
		\qquad
		\log\abs{f'(x)}\sim \frac{c_f}{\gamma}x^\gamma.
	\end{equation*}
\end{remark}

\begin{remark}
	As in the author's earlier PM articles, the assumption in \textup{(\ref{MP-expansion})} that $\abs{f'_+(r_1)}>1$ is used only to ensure the required expansion and distortion properties of the first return map to $[r_1,1]$. The same conclusions remain valid if $\abs{f'_+(r_1)}=1$, provided the induced first return map is uniformly expanding and satisfies the bounded distortion estimates used below. We state the interval-preimage formulas below for the increasing right-branch case; the decreasing right-branch case requires only the corresponding orientation changes.
\end{remark}

Let $Y=[r_1,1]$, and let $R$ be the first return time to $Y$. Define $r_0=1$, let $r_1$ be the branch point above, and, for $n\ge1$, let $r_{n+1}$ be the unique preimage of $r_n$ in $(0,r_1)$. Also define $p_0=1$, and, for $n\ge1$, let $p_n$ be the unique preimage of $r_n$ in $(r_1,1)$. Thus
\begin{equation}\label{eq:general-mp-rp-sequences}
	f(r_{n+1})=r_n,
	\qquad
	f(p_n)=r_n
	\qquad(n\ge1).
\end{equation}
Then $r_n\searrow0$ and, in the increasing right-branch case, $p_n\searrow r_1$. The standard estimates for this class~\cite{LSV1999,Young1999,Gouezel2004} give
\begin{equation}\label{eq:general-mp-tail-estimates}
	r_n\asymp n^{-1/\gamma},
	\qquad
	r_{n-1}-r_n\asymp n^{-1-1/\gamma},
	\qquad
	p_{n-1}-p_n\asymp n^{-1-1/\gamma}.
\end{equation}
Consequently,
\begin{equation}\label{eq:general-return-tail}
	m_Y(R>n)\asymp n^{-1/\gamma},
	\qquad
	m_Y(R=n)\asymp n^{-1-1/\gamma}.
\end{equation}
The induced map $F=f^R:Y\to Y$ is a full-branch Markov map with uniform expansion and bounded distortion.

We assume the following structure on the hole $H$. The condition $H\subset[\varepsilon,1]$ means only that the hole is bounded away from the neutral fixed point; the hole need not be the interval $[\varepsilon,1]$. This induced-system formulation keeps the renewal theorem separate from the technical problem of classifying all geometric holes for which the required killed-operator and final-tail estimates hold.

\begin{definition}[Regular induced hole]\label{def:regular-induced-hole}
	Let $f$ be a generalized Pomeau--Manneville map and let $Y=[r_1,1]$. A hole $H\subset[\varepsilon,1]$, $\varepsilon>0$, is called regular for the inducing scheme if the following hold for $\B=BV(Y)$.
	\begin{subconditions}
		\item\label{H-survivor-branches} The survivor-return sets $Y_n^{\mathrm{surv}}=\{R=n,\ \tau_H>n\}$ are finite unions of return-cylinder subintervals up to endpoints of zero measure, and multiplication by $\1_{Y_n^{\mathrm{surv}}}$ followed by transfer under $F$ acts boundedly on $BV(Y)$.

		\item\label{H-spectral-radius} The killed induced operator $\A=\sum_n\A_n$ has spectral radius $r(\A)<1$ on $BV(Y)$.

		\item\label{H-local-bounds} The local bounds \textup{(\ref{R-fixed-local})} and \textup{(\ref{R-renewal-local})} hold with $\kappa=1/\gamma$.

		\item\label{H-final-tail} There is a positive sequence $(a_t)_{t\ge0}$ with $a_0=1$, $a_t\asymp(1+t)^{-1/\gamma}$, and $a_{t-s}/a_t\to1$ for each fixed $s\ge0$, and a constant $c_H>0$, such that for every $v\in BV(Y)$,
		\begin{equation*}
			\Tail_t(v)=a_t c_H v(r_1+)+o(a_t).
		\end{equation*}
		Moreover, for some $K_H<\infty$,
		\begin{equation*}
			\abs{\Tail_t(v)}\le K_H a_t\norm{v}_{BV}
		\end{equation*}
		for all $t\ge0$ and all $v\in BV(Y)$.
	\end{subconditions}
\end{definition}

\begin{remark}
	Definition~\ref{def:regular-induced-hole} is an admissibility condition for sharp asymptotics, not a maximal class of holes. It is designed to isolate exactly the induced open-system estimates needed by the abstract renewal theorem. The intended examples are standard finite-interval holes bounded away from the neutral fixed point, subject to the usual endpoint exclusions and visibility conditions. The present formulation keeps the paper focused on the renewal structure rather than on a maximal classification of admissible holes. If a hole has a component inside the neutral branch but is bounded away from $0$, then sufficiently long return intervals crossing that component are excluded from the survivor-return sets, which strengthens the local survivor-return bounds. If the hole lies in the base, then the condition reduces to the standard full-branch Gibbs--Markov map with a regular range hole. The condition $c_H>0$ excludes holes which obstruct the long-excursion entrance region at the base endpoint.
\end{remark}

For convergence of rewards, the mass asymptotic in the regular-hole definition is not enough. The final tail must also have stable asymptotics after being weighted by the normalized return-time profile.

\begin{definition}[Tail-profile regularity]\label{def:tail-profile-regular}
	A regular induced hole is called tail-profile regular if final tails have asymptotics not only for their total mass, but also for logarithmically bounded profiles of the normalized return time. More precisely, for every profile function \(\Phi:(1,\infty)\to\mathbb R\) which is continuous except at finitely many points and satisfies
	\begin{equation*}
		\abs{\Phi(u)}\le C_\Phi\left(1+\log\frac{u}{u-1}\right),
		\qquad u>1,
	\end{equation*}
	there is a constant \(c_H(\Phi)\) such that, for every \(v\in BV(Y)\),
	\begin{equation}\label{eq:tail-profile-regularity}
		\int_{Y_{>t}^{\mathrm{surv}}}\Phi\left(\frac{R(y)}{t}\right)v(y)\,dm_Y(y)
		=
		a_t c_H(\Phi)v(r_1+)+o(a_t).
	\end{equation}
	The same bound is required uniformly: there is a constant \(K_\Phi<\infty\) such that
	\begin{equation*}
		\left\lvert
		\int_{Y_{>t}^{\mathrm{surv}}}\Phi\left(\frac{R(y)}{t}\right)v(y)\,dm_Y(y)
		\right\rvert
		\le K_\Phi a_t\norm{v}_{BV}.
	\end{equation*}
\end{definition}

\begin{remark}
	Tail-profile regularity is stronger than the mass asymptotic in \textup{(\ref{H-final-tail})}. It is the additional condition needed to turn neutral-branch reward bounds into reward limits. For standard interval holes whose sufficiently long final tails are not cut by the hole, it follows from the regular variation of the return-time partition. It is stated separately because a total tail asymptotic alone does not logically force convergence after weighting by the normalized return-time profile.
\end{remark}

With these definitions in place, the abstract renewal hypotheses become a direct consequence of the induced estimates built into the regular-hole condition.

\begin{theorem}[Survivor renewal for generalized PM maps]\label{thm:MP-renewal}
	Let $f$ be a generalized Pomeau--Manneville map with exponent $\gamma>0$, let $Y=[r_1,1]$, and let $H\subset[\varepsilon,1]$ be a regular induced hole. Then Hypothesis~\ref{hyp:renewal} holds on $\B=BV(Y)$ with
	\begin{equation*}
		\kappa=1/\gamma.
	\end{equation*}
	Moreover, the final-tail functional has the form
	\begin{equation}\label{eq:ell-eval}
		\ell(v)=c_H v(r_1+),
	\end{equation}
	where $v(r_1+)$ denotes the right-hand limit of the $BV$ representative of $v$ at the left endpoint $r_1$ of $Y$, and $c_H>0$ is the constant from \textup{(\ref{H-final-tail})}.
\end{theorem}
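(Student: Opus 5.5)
The plan is a direct checklist verification: each item of Hypothesis~\ref{hyp:renewal} is matched against the corresponding condition in Definition~\ref{def:regular-induced-hole}, with $\B=BV(Y)$ and $\kappa=1/\gamma$. The only genuinely analytic points are that $BV(Y)$ is an admissible Banach lattice and that the functional $\ell(v)=c_Hv(r_1+)$ is well defined, nonzero, positive and continuous. The remaining items are transcriptions.

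\emph{Step 1: the space and the sequence $a_t$.} I will check that $BV(Y)\subset L^1(m_Y)$, normed by $\norm{v}_{BV}=\norm{v}_{L^1}+\operatorname{Var}(v)$ (or an equivalent sup-plus-variation norm), is a Banach lattice in the sense used here. It is closed under $v\mapsto\abs{v}$, and $\operatorname{Var}\abs{v}\le\operatorname{Var}v$. The sequence $(a_t)$, together with the properties $a_0=1$, $a_t\asymp(1+t)^{-1/\gamma}$ and $a_{t-s}/a_t\to1$, is taken verbatim from (\ref{H-final-tail}). This is the step that fixes $\kappa=1/\gamma$.

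\emph{Step 2: the functional $\ell$.} Every $BV$ function on $[r_1,1]$ has a right limit at $r_1$, and this limit is the same for every representative in the $L^1$ class once we use the normalized (e.g.\ right-continuous) representative. Moreover
\[
\abs{v(r_1+)}\le\sup\abs{v}\le \frac{1}{m_Y(Y)}\int_Y\abs{v}\,dm_Y+\operatorname{Var}(v)\le C\norm{v}_{BV}.
\]
Hence $\ell$ is linear and continuous. If $v\ge0$ a.e., then $v(r_1+)\ge0$, so $\ell$ is positive. Since $c_H>0$ and $\ell(\1_Y)=c_H$, $\ell$ is nonzero. With this $\ell$, (\ref{R-tail}) is exactly (\ref{H-final-tail}), and it gives the identification \eqref{eq:ell-eval}.

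\emph{Step 3: the operators.} I will derive (\ref{R-operators}) from (\ref{H-survivor-branches}) and (\ref{H-spectral-radius}). Boundedness and positivity of each $\A_n$ follow from (\ref{H-survivor-branches}), and positivity is automatic for a transfer operator applied after multiplication by an indicator. The one point not literally stated in the definition is convergence of $\sum_n\A_n$ in operator norm, and I expect this to be the main obstacle. I will obtain it from the Lasota--Yorke/bounded distortion structure of the full-branch Gibbs--Markov map $F$. For a branch of return time $n$, the bounded distortion estimates give
\[
\norm{\mathcal L_F(\1_{I}v)}_{BV}\le C\bigl(m_Y(I)\sup\abs{v}+\operatorname{Var}_{I}v\bigr)\le C\,\norm{v\1_I}_{BV}.
\]
Since $Y_n^{\mathrm{surv}}$ is a finite union of return cylinders of total measure at most $m_Y(R=n)\asymp n^{-1-1/\gamma}$, summing the variations over the disjoint pieces gives $\sum_n\norm{\A_nv}_{BV}\le C\norm{v}_{BV}$, and the tails of this sum are uniformly small. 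If this tail control is delicate, a fallback is available. The renewal local bound (\ref{H-local-bounds}) with $r=1$ gives $\norm{\A_n}=\norm{\A_{1,n}}\le C_1(1+n)^{-1-1/\gamma}$, which is summable, so norm convergence follows directly. The spectral radius bound $r(\A)<1$ is (\ref{H-spectral-radius}), and the Neumann series then defines $\Renew=(I-\A)^{-1}$.

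\emph{Step 4: the local bounds.} Conditions (\ref{R-fixed-local}) and (\ref{R-renewal-local}) are assumed in (\ref{H-local-bounds}) with $\kappa=1/\gamma$. For (\ref{R-renewal-local}) it remains to confirm that $\sum_s\Renew_s$ converges in norm to $\Renew$. The local bound makes the series absolutely summable. Regrouping the absolutely convergent double series $\sum_{r,s}\A_{r,s}$ gives $\sum_{s}\A_{r,s}=\A^r$ for each $r$, and summing over $r$ gives the identity, as in the proof of Lemma~\ref{lem:scalar-domination}.
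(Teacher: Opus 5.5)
Your proposal is correct and is essentially the paper's proof: a condition-by-condition transcription of Definition~\ref{def:regular-induced-hole} into Hypothesis~\ref{hyp:renewal}. Beyond the paper, you check that $\ell(v)=c_Hv(r_1+)$ is continuous, positive and nonzero, and your fallback (operator-norm convergence of $\sum_n\A_n$ from \textup{(\ref{R-fixed-local})} with $r=1$, since $\A_{1,n}=\A_n$) cleanly fills a point the paper leaves implicit, whereas your primary distortion estimate omits the contraction factor $\sup\abs{(F_n^{-1})'}\asymp m_Y(R=n)$ on the variation term and would not by itself give uniformly small tails. Step~4 is unnecessary because \textup{(\ref{H-local-bounds})} asserts all of \textup{(\ref{R-renewal-local})}; if you keep it, derive $\sum_s\Renew_s=\Renew$ from the renewal equation $\Renew_s=\delta_{s,0}I+\sum_{n=1}^{s}\A_n\Renew_{s-n}$ (a Cauchy product of two norm-summable series, giving $(I-\A)\sum_s\Renew_s=I$), not from absolute convergence of $\sum_{r,s}\A_{r,s}$ in $BV$-norm, which $r(\A)<1$ alone does not provide (the bound $\sum_s\norm{\A_{r,s}}\le(\sum_n\norm{\A_n})^r$ helps only when $\sum_n\norm{\A_n}<1$).
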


\begin{proof}
	The expansion and distortion estimates for the induced first return map follow from \textup{(\ref{MP-full-branches})}--\textup{(\ref{MP-neutral-curvature})} and the standard estimates \eqref{eq:general-mp-tail-estimates}. The tail estimates \eqref{eq:general-return-tail} give the local polynomial order $n^{-1-1/\gamma}$ for completed return intervals.

	The regular induced-hole assumptions verify the abstract renewal hypotheses condition by condition. Condition \textup{(\ref{H-survivor-branches})}, together with the bounded distortion of the induced branches, gives boundedness of the survivor-return operators on $BV(Y)$. Condition \textup{(\ref{H-spectral-radius})} gives the killed induced operator and the bound on its spectral radius, hence \textup{(\ref{R-operators})}. Condition \textup{(\ref{H-local-bounds})} gives \textup{(\ref{R-fixed-local})} and \textup{(\ref{R-renewal-local})} with $\kappa=1/\gamma$.

	It remains only to identify the final-tail functional. This is exactly \textup{(\ref{H-final-tail})}, which gives
	\begin{equation*}
		\Tail_t(v)=a_t c_H v(r_1+)+o(a_t)
	\end{equation*}
	and the corresponding uniform tail bound on $BV(Y)$. Thus \textup{(\ref{R-tail})} holds with $\ell$ as in \eqref{eq:ell-eval}. Therefore Hypothesis~\ref{hyp:renewal} holds.
\end{proof}

\subsection{Observable verification}

The next two propositions separate boundedness from convergence. A neutral-branch size and variation condition gives the reward domination hypotheses. A genuine neutral asymptotic gives the final-tail reward limit.

The first technical point is the induced $BV$ estimate for the completed-reward operators. The estimate uses two facts: along a long neutral excursion the reward is at most logarithmic, and bounded distortion converts this logarithmic weight into an operator norm bound.

\begin{lemma}[Induced reward variation bound]\label{lem:induced-reward-variation}
	Let $f$ and $H$ be as in Theorem~\ref{thm:MP-renewal}. Suppose $\psi:[0,1]\to\R$ is bounded, has bounded variation on every interval $[\delta,1]$ with $\delta>0$, and, for some $K_0,\delta_0>0$,
	\begin{equation}\label{eq:psi-vanish}
		\abs{\psi(x)}\le K_0x^\gamma,
		\qquad
		\abs{\psi'(x)}\le K_0x^{\gamma-1}
		\qquad(0<x\le\delta_0),
	\end{equation}
	where the derivative condition may be replaced by the corresponding bounded-variation estimate on the neutral partition intervals. Then there is a constant \(K_\psi<\infty\) such that
	\begin{equation}\label{eq:Cpsi-n-bound}
		\norm{\C_{\psi,n}}_{BV\to BV}
		\le K_\psi n^{-1-1/\gamma}\log(n+2).
	\end{equation}
\end{lemma}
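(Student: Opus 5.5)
We plan to prove \eqref{eq:Cpsi-n-bound} by writing $\C_{\psi,n}$ branch by branch and reducing it to two scalar estimates: a sup bound on the reward $\Psi_R$ on $Y_n$, and a bound on its variation on $Y_n$. On $Y_n$ (for $n\ge2$) the orbit of $y$ visits $Y$ at time $0$, then lands in the neutral interval $[r_n,r_{n-1}]$ at time $1$, and then descends through $[r_{n-k},r_{n-k-1}]$ at time $k$ until it returns to $Y$ at time $n$. By \eqref{eq:general-mp-tail-estimates} the $k$th point satisfies $f^k y\asymp (n-k)^{-1/\gamma}$. Hence \eqref{eq:psi-vanish} gives
\[
\abs{\psi(f^ky)}\le K_0 (f^ky)^\gamma\le C(n-k)^{-1}
\]
for the indices where $f^ky\le\delta_0$. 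The boundedly many remaining indices, namely the first and last steps together with those where $f^ky>\delta_0$, contribute $O(\norm{\psi}_\infty)$. Summing over $k$ yields $\sup_{Y_n}\abs{\Psi_R}\le C\log(n+2)$, and this is the source of the logarithm.

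For the variation, we parametrize $Y_n$ by $z=f^ky$. On each $[r_{n-k},r_{n-k-1}]$ the map $y\mapsto f^ky$ is monotone, so the variation of $\psi\circ f^k$ over $Y_n$ equals the variation of $\psi$ over that neutral interval. By the derivative bound, this is at most
\[
K_0\int_{r_{n-k}}^{r_{n-k-1}}x^{\gamma-1}\,dx\asymp (n-k)^{-1-1/\gamma}\,(n-k)^{(\gamma-1)/\gamma}=(n-k)^{-1}.
\]
The BV-on-partition-intervals variant gives the same bound. Summing over $k$ gives $\operatorname{Var}_{Y_n}\Psi_R\le C\log(n+2)$. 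Since $Y_n^{\mathrm{surv}}$ is a finite union of subintervals of the cylinder $Y_n$ by (\ref{H-survivor-branches}), both bounds pass to each survivor piece.

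Next we insert these bounds into the standard Lasota--Yorke style estimate for a single branch. For a monotone inverse branch $g$ of $F$ restricted to an interval $J\subset Y_n^{\mathrm{surv}}$,
\[
\norm{\mathcal L_F(\1_J\Psi_Rv)}_{BV}\le C\bigl(\sup_J\abs{g'}\bigr)\bigl(\sup_J\abs{\Psi_R}+\operatorname{Var}_J\Psi_R\bigr)\norm{v}_{BV}.
\]
This uses bounded distortion of the induced branches, which controls $\operatorname{Var}(g')$ by $\sup\abs{g'}$, together with $\operatorname{Var}(uw)\le \sup\abs{u}\operatorname{Var}w+\operatorname{Var}u\sup\abs{w}$ and $\sup_J\abs{v}\le \norm{v}_{BV}/m_Y(J)\cdot$(appropriate local term). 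The $\sup_J\abs{v}$ factor must be handled with care: we bound it by $\norm{v}_{\infty}\le\norm{v}_{BV}$ rather than by a local average.

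By distortion, $\sup\abs{g'}\asymp m_Y(Y_n)\asymp n^{-1-1/\gamma}$ from \eqref{eq:general-return-tail}. The survivor pieces only shrink the cylinder, and the number of pieces must be uniformly bounded in $n$ for the constants to stay uniform. We take this as part of (\ref{H-survivor-branches}), or else we sum the lengths, which is still $\le m_Y(Y_n)$ as long as we use the global sup and variation over $Y_n$ instead of per-piece constants. Combining these gives \eqref{eq:Cpsi-n-bound}.

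We expect the main obstacle to be the variation step near the endpoints of the excursion. The first step lands in $[r_n,r_{n-1}]$ from $Y$, and the last step passes through the right branch near $r_1$. There we need $\psi$ to be BV on $[\delta,1]$ together with a uniform count of the indices at which $f^ky>\delta_0$. We will handle this by fixing $\delta_0$ and noting that the number of such indices is bounded independently of $n$, because $r_j<\delta_0$ for all $j\ge j_0$.

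A second subtle point is ensuring that the per-piece BV estimate does not lose a factor from the number of survivor pieces. We will address this by estimating the variation of the sum over pieces directly against $\operatorname{Var}_{Y_n}$.
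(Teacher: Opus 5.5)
Your proposal follows the same route as the paper's proof. You get a logarithmic sup bound for $\Psi_R$ on $Y_n$ from $\abs{\psi(f^ky)}\lesssim (n-k)^{-1}$, add a variation bound for $\Psi_R$ on $Y_n$, and then apply a single-branch $BV$ estimate using bounded distortion and $\sup\abs{g'}\asymp m_Y(Y_n)\asymp n^{-1-1/\gamma}$. You spell this out in more detail than the paper does. Two points need repair; neither is fatal.

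\emph{Exponent slip in the variation step.} On the neutral interval at level $j=n-k$ one has $x^{\gamma-1}\asymp j^{-(\gamma-1)/\gamma}$, not $j^{+(\gamma-1)/\gamma}$. Also, the product you display equals $j^{-2/\gamma}$, not $j^{-1}$. The correct estimate is $\Var_{[r_{j+1},r_j]}\psi\lesssim r_j^{\gamma-1}(r_j-r_{j+1})\asymp j^{-2}$. So the neutral part of $\Var_{Y_n}\Psi_R$ is actually $O(1)$, and the logarithm comes only from the sup bound. Your stated conclusion $\Var_{Y_n}\Psi_R\le C\log(n+2)$ therefore still holds. There is also a harmless off-by-one: at time $k$ the orbit lies in $[r_{n-k+1},r_{n-k}]$.

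\emph{Your fallback for the survivor pieces does not work.} Summing lengths, or comparing with $\Var_{Y_n}\Psi_R$, does not control the jumps created by $\1_{Y_n^{\mathrm{surv}}}$. Each endpoint of a survivor component interior to $Y_n$ produces a jump of size about $\sup\abs{g'}\cdot\sup_{Y_n}\abs{\Psi_R v}$, whatever the component's length. So you do need a uniform bound on the number of components, and this is not literally part of \textup{(\ref{H-survivor-branches})}.

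It is, however, forced by \textup{(\ref{H-local-bounds})}. Taking $r=1$ gives $\A_{1,n}=\A_n$, so $\norm{\A_n}_{BV\to BV}\le C_1(1+n)^{-1-1/\gamma}$. Testing on $v\equiv1$, each interior boundary point of $F(Y_n^{\mathrm{surv}})$ gives a jump $\gtrsim n^{-1-1/\gamma}$ in $\A_n1$, and this bounds the count.

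A cleaner option avoids both distortion and counting. Since $Y_n^{\mathrm{surv}}\subset Y_n$, you have $\C_{\psi,n}v=\A_n(\1_{Y_n}\Psi_R v)$. Together with the multiplier bound $\norm{\1_{Y_n}\Psi_R v}_{BV}\le C(\sup_{Y_n}\abs{\Psi_R}+\Var_{Y_n}\Psi_R)\norm{v}_{BV}\lesssim\log(n+2)\norm{v}_{BV}$, this gives the lemma at once. The paper's own proof passes over the survivor pieces without comment, so flagging the issue is to your credit.
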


\begin{proof}
	During a return interval of length $n$ which passes through the neutral branch, the neutral-branch coordinates satisfy $f^k y\asymp (n-k+1)^{-1/\gamma}$ when indexed backward from the return. Thus \eqref{eq:psi-vanish} gives
	\begin{equation*}
		\abs{\psi(f^k y)}\le C(n-k+1)^{-1}
	\end{equation*}
	for the portion of the return interval in the neutral branch, while the uniformly expanding pieces away from $0$ consist of uniformly bounded orbit segments and therefore contribute $O(1)$. Hence
	\begin{equation}\label{eq:log-reward-bound}
		\sup_{R=n}\abs{S_R\psi}\le C\log(n+2).
	\end{equation}
	The derivative or variation condition in \eqref{eq:psi-vanish} gives the matching variation estimate
	\begin{equation*}
		\Var_{Y_n}(S_R\psi)\le C\log(n+2)
	\end{equation*}
	on each return cylinder, up to the usual distortion constants. Since the $n$th induced branch has image $Y$, bounded distortion and the local size estimate $m_Y(R=n)=O(n^{-1-1/\gamma})$ give \eqref{eq:Cpsi-n-bound}.
\end{proof}

The domination proposition now follows by combining the induced reward variation bound with the final-tail size estimate.

\begin{proposition}[Reward domination for PM observables]\label{prop:observable-domination}
	Let $f$ and $H$ be as in Theorem~\ref{thm:MP-renewal}. Suppose $\psi$ satisfies the hypotheses of Lemma~\ref{lem:induced-reward-variation}. Then $\psi$ satisfies Hypothesis~\ref{hyp:reward-bound}. Consequently the survivor-conditioned Birkhoff sums are bounded.
\end{proposition}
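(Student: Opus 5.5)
The plan is to verify the three conditions of Hypothesis~\ref{hyp:reward-bound} in turn, with $\kappa=1/\gamma$, and then to invoke Theorem~\ref{thm:observable-bounded}. Hypothesis~\ref{hyp:renewal} itself is already supplied by Theorem~\ref{thm:MP-renewal}.

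\textbf{Completed rewards, condition (\ref{B-completed}).} Lemma~\ref{lem:induced-reward-variation} gives $\norm{\C_{\psi,n}}_{BV\to BV}\le K_\psi n^{-1-1/\gamma}\log(n+2)$. Since $(1+n)^{-1-\kappa}\log(n+2)$ is summable, the series $\sum_n\C_{\psi,n}$ converges in operator norm to a bounded operator $\C_\psi$. This disposes of (\ref{B-completed}).

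\textbf{Double-renewed rewards, condition (\ref{B-D-local}).} I would write $\D_{\psi,s}$ as the triple convolution of the operator sequences $(\Renew_i)$, $(\C_{\psi,n})$, $(\Renew_j)$ and bound its norm by the scalar convolution $u*c*u$. Here $u_i=C_R(1+i)^{-1-\kappa}$ comes from (\ref{R-renewal-local}), and $c_n=K_\psi(1+n)^{-1-\kappa}\log(n+2)$.

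The key estimate is that convolving two summable sequences $\alpha_s\le A(1+s)^{-1-\kappa}L(s)$ and $\beta_s\le B(1+s)^{-1-\kappa}L(s)$, with $L$ slowly varying (here $L=\log(\cdot+2)$ or $L=1$), gives the bound $C(1+s)^{-1-\kappa}L(s)$. To see this, split at $s/2$. On the half where the first index is at least $s/2$, that factor is $O(s^{-1-\kappa}\log s)$, and summing the other factor contributes only $\sum\beta<\infty$. The other half is symmetric. Applying this twice gives $\norm{\D_{\psi,s}}\le C_D(1+s)^{-1-\kappa}\log(s+2)$.

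Absolute summability of these bounds makes $\sum_s\D_{\psi,s}$ converge in norm. Rearranging the absolutely convergent triple sum gives $(\sum_i\Renew_i)\C_\psi(\sum_j\Renew_j)=\Renew\C_\psi\Renew$.

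\textbf{Final-tail reward, condition (\ref{B-final}).} This is the main obstacle. On $Y_{>t}^{\mathrm{surv}}$ the point $y$ has $R(y)=n>t$. I would decompose $Y_{>t}^{\mathrm{surv}}\subset\bigcup_{n>t}\{R=n\}$ and use the backward-indexed neutral coordinates $f^ky\asymp(n-k)^{-1/\gamma}$ for $1\le k<n$. This gives
\[
\abs{S_t\psi(y)}\le C\Bigl(1+\sum_{k=1}^{t-1}(n-k)^{-1}\Bigr)\le C\Bigl(1+\log\frac{n}{n-t}\Bigr),
\]
a bound of the form of the profile $\Phi(u)=1+\log\frac{u}{u-1}$ evaluated at $u=R/t$.

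It then remains to show that $\int_{Y_{>t}}(1+\log\frac{R}{R-t})\abs{v}\,dm_Y\le Ca_t\norm{v}_{BV}$. I would bound $\abs{v}\le\norm{v}_\infty\le\norm{v}_{BV}$ and use $m_Y(R=n)\asymp n^{-1-1/\gamma}$. The resulting sum $\sum_{n>t}n^{-1-1/\gamma}\log\frac{n}{n-t}$ is treated in two ranges. For $n\ge2t$ the logarithm is bounded, giving $O(t^{-1/\gamma})$. For $t<n<2t$ it is at most $t^{-1-1/\gamma}\sum_{m=1}^{t}\log(2t/m)=O(t^{-1/\gamma})$, by Stirling.

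The delicate point is the first step, which must be uniform for orbit segments that are not yet near $0$ at early times. Those initial pieces, in the right branch and the first few neutral steps, number $O(1)$ and contribute a bounded amount since $\psi$ is bounded.

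With (\ref{B-final}) established, Theorem~\ref{thm:observable-bounded} gives the boundedness conclusion.
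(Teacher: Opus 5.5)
Your proposal is correct and follows the paper's proof essentially step for step: the lemma gives \textup{(\ref{B-completed})}; the split-at-$s/2$ convolution estimate with an absolutely convergent rearrangement gives \textup{(\ref{B-D-local})} and $\D_\psi=\Renew\C_\psi\Renew$; and the pointwise bound $\abs{S_t\psi}\le C\bigl(1+\log\frac{N}{N-t}\bigr)$ on $\{R=N\}$, summed against $m_Y(R=N)=O(N^{-1-1/\gamma})$ over $t<N\le2t$ and $N>2t$, gives \textup{(\ref{B-final})} (you bound $\abs{v}$ by $\norm{v}_\infty$ where the paper cites bounded distortion, which works equally well). One cosmetic correction: a point with $R(y)=n$ enters the neutral branch at $f(y)\in[r_n,r_{n-1})$, already deep near $0$, so the $O(1)$ steps outside $[0,\delta_0]$ are the base step and the last few steps before the return, not the first neutral steps; the count, and hence your bound, is unaffected.
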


\begin{proof}
	Lemma~\ref{lem:induced-reward-variation} gives the completed-reward local bound
	\begin{equation*}
		\norm{\C_{\psi,n}}_{BV\to BV}
		\le K_\psi n^{-1-1/\gamma}\log(n+2).
	\end{equation*}
	Since $\sum_n n^{-1-1/\gamma}\log(n+2)<\infty$, the series defining $\C_\psi$ converges in operator norm on $BV(Y)$. This verifies the completed-reward operator condition \textup{(\ref{B-completed})}.

	It remains on the completed-return side to check the double-renewed operators. By \textup{(\ref{R-renewal-local})} and the estimate above,
	\begin{equation*}
		\norm{\D_{\psi,s}}_{BV\to BV}
		\le
		C\sum_{i+n+j=s}
		(1+i)^{-1-1/\gamma}
		n^{-1-1/\gamma}\log(n+2)
		(1+j)^{-1-1/\gamma}.
	\end{equation*}
	The standard locally subexponential convolution estimate gives
	\begin{equation*}
		\norm{\D_{\psi,s}}_{BV\to BV}
		\le C(1+s)^{-1-1/\gamma}\log(s+2).
	\end{equation*}
	The same absolute summability justifies the Cauchy product and gives $\sum_s\D_{\psi,s}=\Renew\C_\psi\Renew$ in operator norm. Thus \textup{(\ref{B-D-local})} holds.

	It remains to record the final-tail reward bound. If $R=N>t$, then the part of the orbit observed before time $t$ satisfies
	\begin{equation*}
		\abs{S_t\psi(y)}
		\le
		C\left(1+\sum_{j=N-t+1}^{N}\frac1j\right)
		\le
		C\left(1+\log\frac{N+1}{N-t+1}\right).
	\end{equation*}
	Using the local estimate $m_Y(R=N)=O(N^{-1-1/\gamma})$ and bounded distortion on the final-tail cylinders, we obtain
	\begin{equation*}
		\abs{\Tail_{\psi,t}(v)}
		\le
		C\norm{v}_{BV}
		\sum_{N>t}N^{-1-1/\gamma}
		\left(1+\log\frac{N+1}{N-t+1}\right).
	\end{equation*}
	The last sum is $O(t^{-1/\gamma})=O(a_t)$, by splitting into $t<N\le2t$ and $N>2t$, or by the change of variables $N=\floor{tu}$. Therefore \textup{(\ref{B-final})} holds. The three conditions of Hypothesis~\ref{hyp:reward-bound} are verified.
\end{proof}

\begin{proposition}[Reward convergence for asymptotically regular PM observables]\label{prop:observable-convergence}
	Let $f$ and $H$ be as in Theorem~\ref{thm:MP-renewal}. Assume that $f$ has asymptotically regular behavior near the neutral fixed point in the sense of Definition~\ref{def:asymptotically-regular}, and that $H$ is tail-profile regular in the sense of Definition~\ref{def:tail-profile-regular}. Suppose $\psi$ satisfies the hypotheses of Proposition~\ref{prop:observable-domination} and, in addition, there is $c_\psi\in\R$ such that
	\begin{equation}\label{eq:psi-asymptotic}
		\frac{\psi(x)}{x^\gamma}\to c_\psi
		\qquad(x\to0).
	\end{equation}
	Then $\psi$ satisfies Hypothesis~\ref{hyp:reward}. Consequently the survivor-conditioned Birkhoff sums converge to a finite limit.
\end{proposition}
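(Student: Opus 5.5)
By Proposition~\ref{prop:observable-domination}, $\psi$ already satisfies Hypothesis~\ref{hyp:reward-bound}. Hypothesis~\ref{hyp:reward} therefore only asks for a continuous functional $\ell_\psi$ with $\Tail_{\psi,t}(v)=a_t\ell_\psi(v)+o(a_t)$. Once this is shown, convergence follows from Theorem~\ref{thm:observable-limit}. I aim for $\ell_\psi(v)=c_H(\Phi_\psi)\,v(r_1+)$ for an explicit profile $\Phi_\psi$, which is continuous on $BV(Y)$ because $v\mapsto v(r_1+)$ is bounded by $\norm{v}_{BV}$. The plan is to approximate $S_t\psi(y)$ on $Y^{\mathrm{surv}}_{>t}$ uniformly by $\Phi_\psi(R(y)/t)$ up to an error that is small relative to the logarithmic envelope. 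Then I apply tail-profile regularity (Definition~\ref{def:tail-profile-regular}).

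\textbf{Step 1 (asymptotic position along a long excursion).} Fix $y\in Y$ with $R(y)=N>t$. After the first step $y\mapsto f(y)\in(r_{N-1},r_{N-2}]$, the orbit descends the neutral branch, with $f^k y\in(r_{N-k},r_{N-k-1}]$ roughly. Under \eqref{eq:neutral-asymptotic} we have $f(x)=x+\frac{c_f}{\gamma(\gamma+1)}x^{1+\gamma}(1+o(1))$. A standard comparison with the ODE $\dot x=\frac{c_f}{\gamma(\gamma+1)}x^{1+\gamma}$ gives $r_j\sim (c_f j/(\gamma+1))^{-1/\gamma}$ as $j\to\infty$. Combined with \eqref{eq:psi-asymptotic}, this yields
\[
\psi(f^ky)=\frac{\gamma+1}{c_f}\,c_\psi\,\frac{1+o(1)}{N-k}
\]
uniformly for $N-k\to\infty$.

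\textbf{Step 2 (Riemann sum).} Sum over $0\le k<t$, writing $N=ut$ with $u>1$. I split off three pieces: the indices with $N-k\le M$ (absent when $t<N-M$), the bounded initial segment in $Y$, and the indices where the $o(1)$ has not yet kicked in. This gives
\[
S_t\psi(y)=\frac{(\gamma+1)c_\psi}{c_f}\log\frac{u}{u-1}+E_t(y),
\qquad \Phi_\psi(u):=\frac{(\gamma+1)c_\psi}{c_f}\log\frac{u}{u-1}.
\]
The error satisfies $\abs{E_t}\le C+\varepsilon\,(1+\log\frac{u}{u-1})$ once $t$ is large. The constant $C$ comes from the boundedness of $\psi$, the bounded number of steps near $N$, and the $O(1)$ pieces away from $0$, exactly as in the proof of Proposition~\ref{prop:observable-domination}.

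The additive $O(1)$ term needs more care, since it is not $o(1)$. I would refine it using the exact asymptotics. The finitely many steps near the end of the excursion, at indices $N-k\le M$, occur before time $t$ only when $N-t\le M$. The measure of $\{t<R\le t+M\}$ is $O(Mt^{-1-1/\gamma})=o(a_t)$. On the complementary region the last observed neutral index satisfies $N-k\ge M$, so the error there is at most $\varepsilon$ times the log plus the contribution of the first few steps. That first-step contribution is $\psi(y)+\psi(fy)+\dots$ with $fy\to0$ as $N\to\infty$, so it converges to $\psi(r_1)$ plus a vanishing tail. This constant is itself a profile $\Phi\equiv\psi(r_1+)$; absorbing it, the total profile is $\Phi_\psi+\psi(r_1)$, and I would simply include it in the profile.

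\textbf{Step 3 (apply profile regularity).} The profile $\Phi_\psi(u)+\psi(r_1)$ is continuous on $(1,\infty)$ and satisfies the logarithmic growth bound, so \eqref{eq:tail-profile-regularity} gives its integral as $a_tc_H(\Phi)v(r_1+)+o(a_t)$. The error term integrates to at most
\[
\varepsilon K_{\Phi_0}a_t\norm{v}_{BV}+o(a_t),
\]
where $\Phi_0=1+\log\frac{u}{u-1}$. This uses the uniform bound in Definition~\ref{def:tail-profile-regular}, applied to $\abs{v}$, together with a bound of the type in Proposition~\ref{prop:observable-domination}. Letting $\varepsilon\to0$ gives \eqref{eq:reward-asymptotic-hyp}.

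The main obstacle is Step 2. The error must be controlled \emph{relative to the logarithmic envelope}, uniformly in $N>t$, including the regime $N-t=O(1)$ where the logarithm blows up. I expect to handle this by the measure estimate for $\{t<R\le t+M\}$ together with the domination bound $\abs{S_t\psi}\le C(1+\log\frac{N+1}{N-t+1})$ already established.
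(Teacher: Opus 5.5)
Your proposal is correct and follows essentially the same route as the paper. Both identify the limiting profile $\Phi_\psi(u)=\psi(r_1+)+c_\psi c_*\log\frac{u}{u-1}$, where your ODE comparison gives explicitly $c_*=(\gamma+1)/c_f$. Both show that $S_t\psi$ on $Y_{>t}^{\mathrm{surv}}$ differs from $\Phi_\psi(R/t)$ by an error negligible on the scale $a_t$, and then invoke tail-profile regularity and Theorem~\ref{thm:observable-limit}. Your $\varepsilon$--$M$ argument (discarding $\{t<R\le t+M\}$ by the local measure estimate, and bounding the remaining error pointwise by $\varepsilon(1+\log\frac{u}{u-1})$ through the uniform profile bound applied to $\lvert v\rvert$) only spells out the replacement step that the paper asserts in one line. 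One small correction: the entrance constant should be the right limit $\psi(r_1+)$ throughout, not $\psi(r_1)$.
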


\begin{proof}
	By Proposition~\ref{prop:observable-domination}, Hypothesis~\ref{hyp:reward-bound} holds. It remains to prove the final-tail reward asymptotic. The asymptotic regularity near the neutral fixed point gives regular variation of the neutral preimage sequence; in particular $r_n^\gamma\sim c_*n^{-1}$ for some $c_*>0$. For fixed $u>1$ and $N=\floor{tu}$, \eqref{eq:psi-asymptotic} gives
	\begin{equation*}
		\sum_{j=N-t+1}^{N}\psi(r_j)
		\longrightarrow
		c_\psi c_*\log\frac{u}{u-1}.
	\end{equation*}
	The entrance contribution from the base converges to the right-hand limit $\psi(r_1+)$. Thus the limiting final-tail reward profile is
	\begin{equation}\label{eq:psi-tail-profile}
		\Phi_\psi(u)
		=
		\psi(r_1+)+c_\psi c_*\log\frac{u}{u-1},
		\qquad u>1.
	\end{equation}

	The same bounded-distortion and regular-variation estimates give the corresponding replacement on final-tail cylinders. On the part of $Y_{>t}^{\mathrm{surv}}$ with return time $R=N>t$, the final-tail reward $S_t\psi$ differs from $\Phi_\psi(N/t)$ by an error whose integral against any $BV$ density is $o(a_t)\norm{v}_{BV}$. Hence the final-tail reward functional has the same leading asymptotic as the profile-weighted tail integral with profile $\Phi_\psi$.
	The domination estimate from the proof of Proposition~\ref{prop:observable-domination},
	\begin{equation*}
		\abs{S_t\psi(y)}
		\le
		C\left(1+\log\frac{N+1}{N-t+1}\right),
	\end{equation*}
	is integrable against the limiting density $u^{-1-1/\gamma}\,du$ on $u>1$. Since the profile \eqref{eq:psi-tail-profile} is allowed in Definition~\ref{def:tail-profile-regular}, dominated convergence encoded by tail-profile regularity yields
	\begin{equation*}
		\Tail_{\psi,t}(v)=a_t c_{H,\psi}v(r_1+)+o(a_t)
	\end{equation*}
	for a finite constant $c_{H,\psi}\in\R$ depending on the limiting neutral reward profile, the entrance trace of $\psi$, and the regular hole. Thus Hypothesis~\ref{hyp:reward} holds with
	\begin{equation*}
		\ell_\psi(v)=c_{H,\psi}v(r_1+).
	\end{equation*}
\end{proof}

Combining the abstract reward theorems with the PM verification gives the following application-level statement.

\begin{corollary}[Bounded and convergent survivor-conditioned observables]\label{cor:general-observable}
	Let $f$, $H$, and $v\in BV(Y)$ be as in Theorem~\ref{thm:MP-renewal}, with $v\ge0$ and $\ell(\Renew v)>0$.
	\begin{enumerate}[label=(\alph*),leftmargin=2.5em,itemsep=0.4\baselineskip]
		\item If $\psi$ satisfies the hypotheses of Proposition~\ref{prop:observable-domination}, then
		 \begin{equation*}
			 \sup_{t\ge1}\abs{\E_v[S_t\psi\mid\tau_H>t]}<\infty.
		 \end{equation*}

		\item If, in addition, $f$ has asymptotically regular behavior near the neutral fixed point, $H$ is tail-profile regular, and $\psi$ satisfies \eqref{eq:psi-asymptotic}, then
		 \begin{equation*}
			 \E_v[S_t\psi\mid\tau_H>t]
		 \end{equation*}
		 converges to a finite limit.
	\end{enumerate}
\end{corollary}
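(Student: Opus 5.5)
The corollary is a direct assembly of results already established, and I would prove it in two steps, one for each part.

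For part (a), the plan is a chain of three earlier results. First, Theorem~\ref{thm:MP-renewal} gives Hypothesis~\ref{hyp:renewal} on $\B=BV(Y)$ with $\kappa=1/\gamma$ and $\ell(v)=c_Hv(r_1+)$. Next, Proposition~\ref{prop:observable-domination} upgrades this to Hypothesis~\ref{hyp:reward-bound} for the given $\psi$. We then invoke Theorem~\ref{thm:observable-bounded} with the nonnegative density $v\in BV(Y)$. The standing assumption $\ell(\Renew v)>0$ is exactly the positivity condition that theorem requires. It yields the numerator bound \eqref{eq:numerator-bound} and hence the uniform bound \eqref{eq:observable-bounded}. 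One small bookkeeping point must be checked: Theorem~\ref{thm:observable-bounded} is stated under Hypothesis~\ref{hyp:reward-bound}, which includes Hypothesis~\ref{hyp:renewal}. Both are supplied here, with the same $a_t$ and $\ell$ coming from the regular-hole condition (\ref{H-final-tail}).

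For part (b), the additional assumptions are precisely those of Proposition~\ref{prop:observable-convergence}:
\begin{itemize}
\item asymptotic regularity of the neutral branch (Definition~\ref{def:asymptotically-regular});
\item tail-profile regularity of $H$ (Definition~\ref{def:tail-profile-regular});
\item the asymptotic \eqref{eq:psi-asymptotic}.
\end{itemize}
Together these give Hypothesis~\ref{hyp:reward}, with $\ell_\psi(v)=c_{H,\psi}v(r_1+)$ continuous on $BV(Y)$. Continuity holds because the map $v\mapsto v(r_1+)$ is bounded by $\norm{v}_{BV}$. Theorem~\ref{thm:observable-limit} then applies to $v$, again using $\ell(\Renew v)>0$. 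It gives convergence of $\E_v[S_t\psi\mid\tau_H>t]$ to
\[
\frac{\ell_\psi(\Renew v)+\ell(\D_\psi v)}{\ell(\Renew v)}.
\]
This limit is finite because $\Renew$ and $\D_\psi=\Renew\C_\psi\Renew$ are bounded on $BV(Y)$ and $\ell$, $\ell_\psi$ are continuous.

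No step is a genuine obstacle; the only care needed is consistency of data. The sequence $a_t$ in Hypothesis~\ref{hyp:reward} must be the same as the one in Hypothesis~\ref{hyp:renewal}, and the same holds for the final-tail reward bound (\ref{B-final}). This consistency holds because Proposition~\ref{prop:observable-convergence} derives the reward asymptotic on the scale $a_t$ furnished by (\ref{H-final-tail}) and tail-profile regularity. I would state it explicitly so that dividing the numerator asymptotic \eqref{eq:numerator-asymp} by the survival asymptotic \eqref{eq:survival-asymp} is legitimate.
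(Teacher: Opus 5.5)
Your proposal is correct and matches the paper's proof: part (a) chains Theorem~\ref{thm:MP-renewal}, Proposition~\ref{prop:observable-domination}, and Theorem~\ref{thm:observable-bounded}, and part (b) chains Theorem~\ref{thm:MP-renewal}, Proposition~\ref{prop:observable-convergence}, and Theorem~\ref{thm:observable-limit}. Your extra checks, namely continuity of $\ell_\psi$ on $BV(Y)$ and use of the same scale $a_t$ throughout, are harmless bookkeeping that the paper leaves implicit.
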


\begin{proof}
	Part (a) combines Theorem~\ref{thm:MP-renewal}, Proposition~\ref{prop:observable-domination}, and Theorem~\ref{thm:observable-bounded}. Part (b) combines Theorem~\ref{thm:MP-renewal}, Proposition~\ref{prop:observable-convergence}, and Theorem~\ref{thm:observable-limit}.
\end{proof}

The Lyapunov observable is the main example. Its neutral behavior is controlled directly by the curvature assumptions on the neutral branch.

\begin{corollary}[Survivor-conditioned Lyapunov stretching]\label{cor:lyap}
	Let $f$, $H$, and $v$ be as in Corollary~\ref{cor:general-observable}. Then
	\begin{equation*}
		\sup_{t\ge1}
		\E_v\left[
			\sum_{k=0}^{t-1}\log\abs{f'(f^kx)}
			\;\middle|\;\tau_H>t
			\right]<\infty.
	\end{equation*}
	Consequently,
	\begin{equation*}
		\sup_{t\ge1}\sup_{0\le n\le t}
		\E_v\left[
			\sum_{k=0}^{n-1}\log\abs{f'(f^kx)}
			\;\middle|\;\tau_H>t
			\right]<\infty.
	\end{equation*}
	If, in addition, $f$ has asymptotically regular behavior near the neutral fixed point and $H$ is tail-profile regular, then the diagonal expectations
	\begin{equation*}
		\E_v\left[
			\sum_{k=0}^{t-1}\log\abs{f'(f^kx)}
			\;\middle|\;\tau_H>t
			\right]
	\end{equation*}
	converge to a finite limit.
\end{corollary}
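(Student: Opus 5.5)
The plan is to apply Corollary~\ref{cor:general-observable} with $\psi=\log\abs{f'}$. The work is to check that this observable meets the hypotheses of Proposition~\ref{prop:observable-domination}, i.e.\ of Lemma~\ref{lem:induced-reward-variation}, and, under the extra assumptions, condition \eqref{eq:psi-asymptotic}.

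\emph{Global properties.} By \textup{(\ref{MP-bounded-derivatives})}, $\abs{f'}$ is bounded above, and by \textup{(\ref{MP-expansion})}, $\abs{f'}\ge1$. Hence $0\le\psi\le\log\sup\abs{f'}<\infty$, so $\psi$ is bounded and nonnegative.

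\emph{Bounded variation away from $0$.} Fix $\delta>0$. On $[\delta,r_1)$, $f''$ is bounded by \textup{(\ref{MP-neutral-curvature})}. On $(r_1,1]$, $f''$ is bounded by \textup{(\ref{MP-bounded-derivatives})}. Since $\abs{f'}\ge1$, the derivative $\psi'=f''/f'$ is bounded on each branch piece. So $\psi$ has bounded variation on $[\delta,1]$, the single jump at $r_1$ adding only finite variation.

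\emph{Neutral estimates.} Near $0$, integrate \textup{(\ref{MP-neutral-curvature})} from $0$ using $f'(0)=1$:
\[
f'(x)-1=\int_0^x f''\asymp x^\gamma/\gamma .
\]
Since $\log(1+y)\asymp y$ for small $y\ge0$, this gives $\abs{\psi(x)}\le K_0x^\gamma$. The derivative bound is
\[
\abs{\psi'(x)}=\frac{f''(x)}{f'(x)}\le C_f x^{\gamma-1}.
\]
Together these verify \eqref{eq:psi-vanish}. This is where $\gamma>0$ and the one-sided $C^1$ regularity at $0$ from \textup{(\ref{MP-regularity})} enter.

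\emph{First two claims.} Proposition~\ref{prop:observable-domination} now gives Hypothesis~\ref{hyp:reward-bound}, and Corollary~\ref{cor:general-observable}(a) gives the bound on the diagonal expectations. On $\{\tau_H>t\}$ the observable is nonnegative, so Corollary~\ref{cor:two-parameter} yields the two-parameter bound. One could also argue directly: for $n\le t$, $S_n\psi\le S_t\psi$ pointwise.

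\emph{Convergence.} Assume in addition asymptotically regular neutral behavior and tail-profile regularity. By the remark after Definition~\ref{def:asymptotically-regular}, l'H\^opital (or integrating $f''\sim c_fx^{\gamma-1}$) gives $f'(x)-1\sim(c_f/\gamma)x^\gamma$. Then $\log(1+y)=y+O(y^2)$ yields
\[
\psi(x)/x^\gamma\to c_f/\gamma,
\]
which is \eqref{eq:psi-asymptotic} with $c_\psi=c_f/\gamma$. Proposition~\ref{prop:observable-convergence} then gives Hypothesis~\ref{hyp:reward}, and Corollary~\ref{cor:general-observable}(b) gives convergence of the diagonal expectations.

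\emph{Main obstacle.} The only nonroutine point is the two-sided comparison $\log f'(x)\asymp x^\gamma$ uniformly on $(0,\delta_0]$. The upper bound is all that is needed, and it follows from $f'(x)-1\le C_f x^\gamma/\gamma$ together with $\log(1+y)\le y$. No lower bound is required for the hypotheses.
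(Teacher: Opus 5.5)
Your proposal is correct and follows essentially the same route as the paper. You check that $\psi=\log\abs{f'}$ meets the hypotheses of Proposition~\ref{prop:observable-domination} via $f'(x)-1=\int_0^x f''\asymp x^\gamma$ and $\psi'=f''/f'=O(x^{\gamma-1})$, use $\psi\ge0$ with Corollary~\ref{cor:two-parameter} for the two-parameter bound, and under \eqref{eq:neutral-asymptotic} obtain $\psi(x)/x^\gamma\to c_f/\gamma$ to invoke Proposition~\ref{prop:observable-convergence}, with your added checks of boundedness and of bounded variation on $[\delta,1]$ filling in details the paper leaves implicit.
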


\begin{proof}
	Near $0$, \textup{(\ref{MP-neutral-curvature})} implies
	\begin{equation*}
		f'(x)-1=\int_0^x f''(u)\,du\asymp x^\gamma.
	\end{equation*}
	Also
	\begin{equation*}
		\frac{d}{dx}\log\abs{f'(x)}=\frac{f''(x)}{f'(x)}=O(x^{\gamma-1})
	\end{equation*}
	on the neutral branch. Therefore $\psi=\log\abs{f'}$ satisfies the hypotheses of Proposition~\ref{prop:observable-domination}. Since $\log\abs{f'}\ge0$, the two-parameter bound follows from Corollary~\ref{cor:two-parameter}.

	If \eqref{eq:neutral-asymptotic} holds, then
	\begin{equation*}
		\log\abs{f'(x)}\sim \frac{c_f}{\gamma}x^\gamma,
	\end{equation*}
	so \eqref{eq:psi-asymptotic} holds for $\psi=\log\abs{f'}$. Proposition~\ref{prop:observable-convergence} and Theorem~\ref{thm:observable-limit} give convergence.
\end{proof}

\section{Holes containing the neutral fixed point}\label{sec:neutral-hole}

The preceding results concern holes bounded away from the neutral fixed point and satisfying the induced regularity assumptions. If the hole contains a neighborhood of the neutral fixed point, then every survivor orbit segment avoids that neighborhood, and the behavior is opposite: the orbit is forced into a uniformly expanding region.

\begin{proposition}[Linear stretching for holes containing the neutral fixed point]\label{prop:linear-hole}
	Let $f$ be a piecewise $C^1$ intermittent interval map with a neutral fixed point at $0$. Suppose that $f$ is uniformly expanding away from $0$ in the following sense: for every $a>0$ there is $\lambda_a>1$ such that
	\begin{equation}\label{eq:uniform-away-from-zero}
		\abs{f'(x)}\ge\lambda_a
	\end{equation}
	for all $x\in[a,1]$ away from partition boundaries, with one-sided derivatives at partition endpoints. If $H\supset[0,a]$ for some $a>0$, then there is $c_a>0$ such that every survivor orbit segment on which the derivatives are defined obeys
	\begin{equation*}
		S_t\log\abs{f'}(x)\ge c_at.
	\end{equation*}
	Consequently, for any absolutely continuous initial probability measure \(\nu\) with \(\nu(\{\tau_H>t\})>0\),
	\begin{equation*}
		\E_\nu[S_t\log\abs{f'}\mid\tau_H>t]\ge c_at.
	\end{equation*}
\end{proposition}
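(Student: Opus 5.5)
The plan is a pointwise argument followed by averaging; no renewal machinery is needed. Fix $x$ with $\tau_H(x)>t$. By the definition \eqref{eq:hitting-time} of the hitting time, $f^k x\notin H$ for every $0\le k\le t$. Since $H\supset[0,a]$, this gives $f^k x\in(a,1]$ for $0\le k\le t-1$. We only need the indices appearing in $S_t$, and the surviving index $k=t$ is not used.

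On such an orbit segment the derivatives are assumed defined, with one-sided derivatives at partition endpoints. The uniform expansion hypothesis \eqref{eq:uniform-away-from-zero}, applied on $[a,1]$, then gives $\abs{f'(f^kx)}\ge\lambda_a>1$ for each $0\le k\le t-1$. Set $c_a\coloneqq\log\lambda_a>0$. Summing the termwise bounds $\log\abs{f'(f^kx)}\ge c_a$ over $k=0,\dots,t-1$ yields
\[
	S_t\log\abs{f'}(x)\ge c_a t .
\]

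For the conditional expectation, note that $\nu$ is absolutely continuous, and $f$ is piecewise $C^1$ with countably many partition points, so their preimages under $f^k$ form a countable set. Hence the set of $x\in\{\tau_H>t\}$ on which some derivative along the segment is undefined is $\nu$-null. The pointwise bound therefore holds $\nu$-almost everywhere on $\{\tau_H>t\}$. Because $\nu(\{\tau_H>t\})>0$, integrating over $\{\tau_H>t\}$ against $\nu$ and dividing by $\nu(\{\tau_H>t\})$ gives $\E_\nu[S_t\log\abs{f'}\mid\tau_H>t]\ge c_a t$. The expectation is well defined in $[c_at,\infty]$ because the integrand is bounded below by a positive constant on the conditioning set.

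The only step needing care is the bookkeeping at partition endpoints and null sets. In particular, the point $a$ itself and partition boundaries inside $(a,1]$ must be covered by the one-sided derivative convention built into \eqref{eq:uniform-away-from-zero}. Beyond this, the argument is direct.
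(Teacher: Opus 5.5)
Your proposal is correct and follows essentially the same route as the paper's proof. The paper also derives the pointwise bound $S_t\log\abs{f'}(x)\ge t\log\lambda_a$ from the orbit staying outside $[0,a]$, takes $c_a=\log\lambda_a$, and discards the countable (hence null) set of preimages of partition points before averaging against the absolutely continuous measure.
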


\begin{proof}
	If $\tau_H(x)>t$, then $f^kx\notin[0,a]$ for $0\le k<t$. Thus the orbit segment remains in a uniformly expanding region. At points where the derivatives along the orbit segment are defined, \eqref{eq:uniform-away-from-zero} gives
	\begin{equation*}
		S_t\log\abs{f'}(x)
		\ge t\log\lambda_a.
	\end{equation*}
	Take $c_a=\log\lambda_a$. The orbit segments which meet partition boundaries form a finite or countable union of preimages of boundary points and hence have Lebesgue measure zero. Therefore the same inequality holds after averaging with respect to any absolutely continuous initial probability measure, provided the survivor set has positive measure.
\end{proof}

\begin{remark}
	Point holes are not covered by the renewal theorem. Related problems for holes shrinking around the indifferent fixed point have a different asymptotic character; see~\cite{BonannoTikekar2026}. Indeed, if $H=\{z\}$ is a singleton, then for the maps considered here each finite preimage set $f^{-k}(H)$ is finite, hence Lebesgue-null. Therefore every absolutely continuous initial measure assigns zero mass to
	\[
		\bigcup_{k=0}^{t-1}f^{-k}(H),
	\]
	so no positive mass escapes through a singleton hole at any finite time. In this sense a point hole produces no nontrivial escape problem for the absolutely continuous initial distributions considered here. In particular, the singleton hole $\{0\}$ is not covered by Proposition~\ref{prop:linear-hole}, since that proposition requires the hole to contain a neighborhood of the neutral fixed point.
\end{remark}

\section{Discussion}

The main theorem provides a deterministic renewal theorem behind the numerical and stochastic picture of~\cite{BrevittKlages2025}. Under the survivor-conditioned distribution, the number $N_t$ of completed survivor returns converges in law and hence remains tight. The leading contribution to survival is therefore described by a bounded number of completed survivor returns followed by a final return interval whose length is comparable to the observation time. Once this decomposition is proved, several consequences follow: polynomial survival asymptotics, the limiting distribution of the number of completed survivor returns, a geometric limiting law in the conditionally invariant case, boundedness of survivor-conditioned Birkhoff sums for observables satisfying the reward domination hypotheses, convergence for observables with a regular final-tail asymptotic, bounded Lyapunov stretching in the generalized PM application, convergence of Lyapunov stretching when the behavior near the neutral fixed point and the final tails are asymptotically regular, and, under an entropy-domination hypothesis, zero entropy rate for return-length names.

This separates the survivor-conditioned renewal structure studied here from earlier work on open intermittent maps. Demers--Fernandez and Demers--Todd describe escape, density collapse, conditionally invariant structures, and thermodynamic regimes; for broader background on escape rates, conditionally invariant measures, and hitting and escaping statistics see~\cite{DemersYoung2006,DemersWrightYoung2010,BruinDemersTodd2018}. Our main theorem is a pathwise refinement: it gives the renewal decomposition of survivor-conditioned orbit segments and the associated reward estimates. The price for the sharper limiting law is the regular induced-hole assumption. Rough positive-measure visibility is enough for boundedness estimates, but sharp fixed-$r$ asymptotics require enough regularity to control the killed induced transfer operators and the final-tail functional. Likewise, neutral-branch size estimates for observables give bounded survivor-conditioned rewards, whereas convergence requires a limiting asymptotic profile.

\end{document}